\newtheorem{theorem}{Theorem}[section]
\newtheorem{lemma}[theorem]{Lemma}
\newtheorem{proposition}[theorem]{Proposition}
\theoremstyle{definition}
\newtheorem{definition}[theorem]{Definition}
\newtheorem{example}[theorem]{Example}
\newtheorem{remark}[theorem]{Remark}
\DeclareMathOperator\Span{span}
\DeclareMathOperator\tr{tr}
\numberwithin{equation}{section}
\DeclareMathOperator*{\esssup}{ess\,sup}
\def\N{{\mathbb{N}}}
\def\h{{\mathcal{H}}}
\def\r{{\mathbb{R}^{n}}}
\def\R{{L^2(\mathbb{R}^{n})}}
\def\l{{L^2(\mathbb{R}^{2n})}}
\def\L{{L^{2}(\mathbb{T}^{n}\times \mathbb{T}^{n}, L^{2}(\mathbb{R}^{n}))}}
\def\Ll{{L^{2}(\mathbb{T}^{n}\times \mathbb{T}^{n};\left[\p,\p\right])}}
\def\Z{{\mathbb{Z}^n}}
\def\I{{\int\limits_{\T}}}
\def\T{{\mathbb{T}^{n}}}
\def\A{{\mathcal{A}}}
\def\J{{\j^{-1}}}
\def\j{{\mathcal{J}}}
\def\d{{\mathcal{D}}}
\def\z{{\mathbb{Z}}}
\def\t{{T_{(k,l)}^t}}
\def\c{{\xi}}
\def\d{{\xi^{'}}}
\def\e{{\eta}}
\def\p{{\phi}}
\def\v{{V^{t}}}
\newcommand\numberthis{\addtocounter{equation}{1}\tag{\theequation}}
\newcommand{\be}{\begin{equation}}
	\newcommand{\ee}{\end{equation}}
\begin{document}
	\title{Twisted shift preserving operators on $L^{2}(\mathbb{R}^{2n})$}
	
	%    Information for first author
	%\author{S. R. Das}
	%    Address of record for the research reported here
	%\address{Department of Mathematics, Indian Institute of Technology, Madras, India}
	%\email{santiranjandas100@gmail.com}
	%    Current address
	%\curraddr{Department of Mathematics and Statistics,
		%Case Western Reserve University, Cleveland, Ohio 43403}
	%\email{Email}
	%    \thanks will become a 1st page footnote.
	%\thanks{The first author was supported in part by NSF Grant \#000000.}
	%    Information for second author
	\author{Radha Ramakrishnan}
	\address{Department of Mathematics, Indian Institute of Technology, Madras, India}
	\email{radharam@iitm.ac.in}
	%    Information for third author
	
	\author{Rabeetha Velsamy}
	\address{Department of Mathematics, Indian Institute of Technology, Madras, India}
	\email{rabeethavelsamy@gmail.com}
	%\thanks{Support information for the second author.}
	%    General info
	\subjclass{Primary 42C15; Secondary 42B10, 47B02}
	
	%\date{January 1, 2001 and, in revised form, June 22, 2001.}
	
	%\dedicatory{This paper is dedicated to our advisors.}
	
	\keywords{Frames, Range operator, Twisted shift preserving operator, Weyl transform, Zak transform.}
	\begin{abstract}
	In the first part of the paper we prove that a twisted shift-invariant subspace of $L^{2}(\mathbb{R}^{2n})$ can be decomposed as a direct sum of mutually orthogonal principal twisted shift-invariant spaces such that the respective system of twisted translates forms a Parseval frame sequence. Later, we introduce twisted preserving operators and the corresponding range operators. We establish that the twisted shift preserving operators and the corresponding range operators simultaneously share some properties in common, namely, self-adjoint, unitary, range of the spectrum and bounded below properties. Finally we prove that the frame operator and its inverse associated with a system of twisted translates of $\{\p_{s}\}_{s\in \z}$ are shift preserving and investigate their corresponding range operators.
	\end{abstract}
	\maketitle
	\section{Introduction}
	Shift-invariant spaces play an important role in many fields such as time-frequency analysis, sampling theory, approximation theory, numerical analysis, electrical engineering and so on. Bownik in \cite{bownik}, characterized shift-invariant system in terms of range functions and obtained characterization for a system of translates on $\mathbb{R}^{n}$ to be a frame sequence and a Riesz sequence.\\
	
	The range function represents the shift-invariant space $V$ as a measurable field of closed subspaces, called fiber spaces, of $l^{2}(\mathbb{Z}^{n})$. The connection between $V$ and its associated range function is obtained through a map $\tau$, which is an isometric isomorphism. In \cite{bownik}, Bownik considered a shift preserving operator acting on a shift-invariant space and studied its properties through the operator called ``range operator''. The shift preserving operators are nothing but those operators which commute with integer translates. On the other hand, a range operator is a representation of a shift preserving operator as a measurable field of linear operators using the $\tau$ map as mentioned above.\\
	
	These works were later generalized to locally compact abelian groups by Cabrelli and Paternostro in \cite{cabrelli} and independently by Kamyabi Gol and Raisi Tousi in \cite{kamyabi,KamyabiTaiwan}. These results were studied for a non-abelian compact group in \cite{Shravan}. Recently a generalized spectral theorem for normal bounded shift preserving operators has been proved by Aguilera et al in \cite{Aguilera}. In another recent work, Barbieri et al \cite{Barbierigppreserving} consider a second countable locally compact abelian group $G$ and operators defined on subspaces of $L^{2}(G)$, which are invariant under the action of a group $\Gamma$, a semi-direct product of a uniform lattice of $G$, with a discrete group of automorphisms. They generalize the notion of shift preserving operators to ``$\Gamma$ preserving operators'' and obtain a spectral decomposition for such operators. Apart from these theortical works the shift preserving operators are useful in signal processing applications, particularly in digital filter modeling.\\
	
	Twisted convolution is a non-standard convolution which arises while transferring the convolution of the Heisenberg group to the complex plane. Under this operation of twisted convolution, $L^{1}(\mathbb{R}^{2n})$ turns out to be a non commutative Banach algebra. Hence the study of (twisted) shift-invariant spaces on $\mathbb{R}^{2n}$ completely differs from the perspective of the usual shift-invariant spaces on $\mathbb{R}^{n}$. There are several works available in the literature in connection with shift-invariant spaces, frames, Riesz bases, wavelets on the Heisenberg group and its variants such as polarized Heisenberg group, A.Weil's abstract Heisenberg group. (See \cite{shannon,Mayeli,arati_orthonormality,arati_muckenhoupt,Arati,saswata_H.G,Das_H.G}). In order to study shift preserving operators in future in the context of Heisenberg group, we first attempt to study shift preserving operators in connection with twisted shift-invariant spaces in this paper. In fact, in order to study problems related to the group Fourier transform on the Heisenberg group $\mathbb{H}^{n}$ whose underlying manifold is $\mathbb{R}^{n}\times\mathbb{R}^{n}\times \mathbb{R}$, an important technique is to take the partial Fourier transform in the central variable and reduce the study to the case $\mathbb{R}^{2n}$. Then the analysis on the Heisenberg group can be investigated by first looking into the analysis based on the Weyl transform and twisted convolution.\\
	
	Recently, in \cite{rabeetha} the authors studied twisted shift-invariant system and obtained characterization theorems for such system to form a frame sequence or a Riesz sequence. In \cite{rabeetha_2} twisted shift-invariant spaces along with frames and Riesz sequences were studied from a different point of view. In fact, Zak transform associated with the Weyl transform was introduced and the characterization for the system of translates to form a frame sequence or a Riesz sequence was obtained using a bracket map arising out of the above Zak transform.\\
	
	In section $3$ of this paper, we introduce the $\j$ map using the Zak transform associated with the Weyl transform on $L^{2}(\mathbb{R}^{2n})$. For $\p\in L^{2}(\mathbb{R}^{2n})$, we mention the characterization for the system of twisted translates $E^{t}(\p)$ to be a Parseval frame for the principal twisted shift-invariant space $V^{t}(\p)$, in terms of the $\j$ map. Subsequently, we obtain a decomposition for a twisted shift-invariant subspace of $L^{2}(\mathbb{R}^{2n})$ as a direct sum of mutually orthogonal principal twisted shift-invariant spaces such that the respective system of twisted translates forms a Parseval frame sequence. In section $4$, we introduce a twisted shift preserving operator and a range operator. Most precisely, we obtain a characterization for a twisted shift preserving operator in terms of a range operator. Further, we establish that the twisted shift preserving operators and the corresponding range operators simultaneously share some properties in common, namely, self-adjoint, unitary, range of the spectrum and bounded below properties. In section $5$, we prove that the frame operator and its inverse associated with a system of twisted translates of $\{\p_{s}\}_{s\in \z}$ are shift preserving. We also prove that the range operators associated with the frame operator and its inverse turn out to be the dual Gramian and its inverse respectively corresponding to the system $\{\j\p_{s}(\cdot, \cdot)\}_{s\in \z}$. 
	\section{Notation and Background}
		Let $\h\neq 0$ be a separable Hilbert space.
	\begin{definition}
		A sequence $\{f_{k} : k\in\z\}$ in $\h$ is called a frame for $\h$ if there exist two constants $A,B>0$ such that
		\begin{align*}
			A\|f\|^{2}\leq \sum_{k\in \z}|\langle f,f_{k}\rangle|^{2} \leq B\|f\|^{2},\hspace{10mm}\forall\ f\in \h.\numberthis \label{frm}
		\end{align*}
	\end{definition}
	If only the right hand side inequality of \eqref{frm} holds, then $\{f_{k} : k\in \z\}$ is called a Bessel sequence. If $A=B$ in \eqref{frm}, then $\{f_{k} : k\in \z\}$ is called a tight frame. If \eqref{frm} holds with $A=B=1$, then $\{f_{k} : k\in \z\}$ is called a Parseval frame. If $\{f_{k} : k\in \z\}$ is a frame for $\overline{\Span\{f_{k} : k\in \z\}}$, then it is called a frame sequence. The frame operator $S:\h\rightarrow \h$ associated with a frame $\{f_{k} : k\in \z\}$ is defined by
	\begin{align*}
		Sf:=\sum\limits_{k\in \z}\langle f,f_{k}\rangle f_{k},\hspace{10mm}\forall\ f\in \h.
	\end{align*}
	It can be shown that $S$ is a bounded, invertible, self-adjoint and positive operator on $\h$. In addition, $\{S^{-1}f_{k} : k\in \z\}$ is also a frame with frame operator $S^{-1}$ and frame bounds $B^{-1}, A^{-1}$.\\
	
	A closed subspace $V\subset L^{2}(\mathbb{R})$ is called a shift-invariant space if $f\in V$ $\implies$ $T_{k}f\in V$ for any $k\in \z$ where $T_{k}$ denotes the translation operator $T_{k}f(y)=f(y-x)$. In particular, for $\p\in L^{2}(\mathbb{R})$, the space $V(\phi)=\overline{\Span\{T_{k}\p : k\in \z\}}$ is called the principal shift-invariant space. We say $\psi\in L^{2}(\mathbb{R})$ is a Parseval frame generator of $V(\p)$ if \[\sum\limits_{k\in\z}|\langle f, T_{k}\psi\rangle |^{2}=\|f\|^{2},\hspace{3mm}\text{for all}~f\in V(\p).\]
	
	We refer to \cite{CN} for the study of frames.\\
	
	Now, we provide the necessary background for the study of the Heisenberg group.\\
	
	The Heisenberg group $\mathbb{H}^{n}$ is a nilpotent Lie group whose underlying manifold is $\mathbb{R}^{n}\times\mathbb{R}^{n}\times\mathbb{R}$ with the group operation defined by $(x,y,t)(u,v,s)=(x+u,y+v,t+s+\frac{1}{2}(u\cdot y-v\cdot x))$ and the Haar measure is the Lebesgue measure $dx\,dy\,dt$ on $\mathbb{R}^{n}\times\mathbb{R}^{n}\times\mathbb{R}$. Using the Schr\"{o}dinger representation $\pi_{\lambda},$ $\lambda\in\mathbb{R}^{\ast},$ given by
	\begin{equation*}
		\pi_{\lambda}(x,y,t)\p(\xi)=e^{2\pi i\lambda t}e^{2\pi i\lambda(x\cdot\xi+\frac{1}{2}x\cdot y)}\p(\xi+y),~\p\in L^{2}(\mathbb{R}^{n}),
	\end{equation*}
	we define the group Fourier transform of $f\in L^{1}(\mathbb{H}^{n})$ as
	\begin{equation*}
		\widehat{f}(\lambda)=\int_{\mathbb{H}^{n}}f(z,t)\pi_{\lambda}(z,t)\,dzdt,~\text{where}~\lambda\in\mathbb{R}^{\ast},
	\end{equation*}
	which is a bounded operator on $L^{2}(\mathbb{R}^{n})$. In otherwords, for $\p\in L^{2}(\mathbb{R}^{n})$, we have
	\begin{equation*}
		\widehat{f}(\lambda)\p=\int_{\mathbb{H}^{n}}f(z,t)\pi_{\lambda}(z,t)\p\,dzdt,
	\end{equation*}
	where the integral is a Bochner integral taking values in $L^{2}(\mathbb{R}^{n}).$ If $f$ is also in $L^{2}(\mathbb{H}^{n})$, then $\widehat{f}(\lambda)$ is a Hilbert-Schmidt operator. Define
	\begin{equation*}
		f^{\lambda}(z)=\int_{\mathbb{R}}e^{2\pi i\lambda t}f(z,t)\,dt,
	\end{equation*}
	which is the inverse Fourier transform of $f$ in the $t$ variable. Then we can write
	\begin{equation*}
		\widehat{f}(\lambda)=\int_{\mathbb{C}^{n}}f^{\lambda}(z)\pi_{\lambda}(z,0)dz.
	\end{equation*}
	Let $g\in L^{1}(\mathbb{C}^{n})$. Define
	\begin{equation*}
		W_{\lambda}(g)=\int_{\mathbb{C}^{n}}g(z)\pi_{\lambda}(z,0)\,dz.
	\end{equation*}
	When $\lambda=1$, it is called the Weyl transform of $g$, denoted by $W(g)$. This can be explicitly written as
	\begin{equation*}
		W(g)\p(\xi)=\int_{\mathbb{R}^{2n}}g(x,y)e^{2\pi i(x\cdot\xi+\frac{1}{2}x\cdot y)}\p(\xi+y)\,dxdy,~\p\in L^{2}(\mathbb{R}^{n}).
	\end{equation*}
	The Weyl transform is an integral operator with kernel $K_{g}(\xi,\eta)=\int_{\mathbb{R}^{n}}g(x,\eta-\xi)e^{\pi ix\cdot(\xi+\eta)}dx$. If $g\in L^{1}\cap L^{2}(\mathbb{C}^{n})$, then $K_{g}\in L^{2}(\mathbb{R}^{2n})$, which implies that $W(g)$ is a Hilbert-Schmidt operator whose norm is given by $\|W(g)\|^{2}_{\mathcal{B}_{2}}=\|K_{g}\|^{2}_{L^{2}(\mathbb{R}^{2n})}$, where $\mathcal{B}_{2}$ is the Hilbert space of Hilbert-Schmidt operators on $L^{2}(\mathbb{R}^{n})$ with inner product $(T,S)=\tr(TS^{\ast})$. The Plancherel formula and the inversion formula for the Weyl transform are given by $\|W(g)\|^{2}_{\mathcal{B}_{2}}=\|g\|^{2}_{L^{2}(\mathbb{C}^{n})}$ and $g(w)=\tr(\pi(w)^{\ast}W(g))$, $w\in \mathbb{C}^{n}$, respectively. For a detailed study of analysis on the Heisenberg group we refer to \cite{follandphase, thangavelu}.
	
	\begin{definition}\cite{saswata}
		Let $\p \in \l$. For $(k,l)\in \mathbb{Z}^{2n},$ the twisted translation $\t\p$ of $\p$, is defined by
		\begin{align*}
			\t\p(x,y)=e^{\pi i(x\cdot l-y\cdot k)}\p(x-k,y-l),\hspace{5mm}(x,y)\in \mathbb{R}^{2n}.
		\end{align*}
	\end{definition}
	Using the definition of twisted translation, we have
	\begin{align*}
		T_{(k_1,l_1)}^tT_{(k_2,l_2)}^t=e^{-\pi i(k_1\cdot l_2-l_1\cdot k_2)}T_{(k_1+k_2,l_1+l_2)}^t,\ \ \ \ \ \forall\ (k_1,l_1),(k_2,l_2)\in\mathbb{Z}^{2n}.\numberthis  \label{comptsttrns}
	\end{align*}
	\begin{lemma}\label{twistker}\cite{saswata}
		Let $\p \in \l$. Then the kernel of the Weyl transform of $\t\p$ satisfies
		\begin{align*}
			K_{\t\p}(\xi,\eta)=e^{\pi i(2\xi+l)\cdot k}K_{\p}(\xi+l,\eta).
		\end{align*}
	\end{lemma}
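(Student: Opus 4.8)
The plan is to verify the identity by a direct computation from the integral formula for the Weyl kernel, after first reducing to a dense class of functions for which that formula converges absolutely. Recall that for $g\in L^{1}\cap L^{2}(\mathbb{R}^{2n})$ one has pointwise $K_{g}(\xi,\eta)=\int_{\r}g(x,\eta-\xi)\,e^{\pi i x\cdot(\xi+\eta)}\,dx$, whereas for a general $g\in\l$ the kernel $K_{g}$ is defined by density, using $\|K_{g}\|_{L^{2}(\mathbb{R}^{2n})}=\|W(g)\|_{\mathcal{B}_{2}}=\|g\|_{L^{2}(\mathbb{C}^{n})}$. Both sides of the asserted equality depend boundedly and linearly on $\p$: the left-hand side because $\p\mapsto\t\p$ is unitary on $\l$ and $g\mapsto K_{g}$ is an isometry, and the right-hand side because it is the composition of $g\mapsto K_{g}$ with a translation in the first variable and with multiplication by the unimodular function $(\xi,\eta)\mapsto e^{\pi i(2\xi+l)\cdot k}$. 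Hence it suffices to establish the identity for $\p\in L^{1}\cap L^{2}(\mathbb{R}^{2n})$ and then pass to the limit.

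So fix $\p\in L^{1}\cap L^{2}(\mathbb{R}^{2n})$. First I would substitute the definition of the twisted translation into the kernel formula, obtaining
\[
K_{\t\p}(\xi,\eta)=\int_{\r}\t\p(x,\eta-\xi)\,e^{\pi i x\cdot(\xi+\eta)}\,dx=\int_{\r}e^{\pi i\left(x\cdot l-(\eta-\xi)\cdot k\right)}\,\p(x-k,\eta-\xi-l)\,e^{\pi i x\cdot(\xi+\eta)}\,dx.
\]
Next I would apply the change of variables $x\mapsto x+k$, which leaves the domain $\r$ and the Lebesgue measure unchanged and turns the argument of $\p$ into $(x,\eta-\xi-l)$; this creates the extra factors $e^{\pi i k\cdot l}$ (from $e^{\pi i x\cdot l}$) and $e^{\pi i k\cdot(\xi+\eta)}$ (from $e^{\pi i x\cdot(\xi+\eta)}$).

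The third step is to pull the $k$-dependent constants out of the integral. The three exponentials involving $k$ multiply to
\[
e^{\pi i k\cdot l}\,e^{-\pi i(\eta-\xi)\cdot k}\,e^{\pi i k\cdot(\xi+\eta)}=e^{\pi i k\cdot\left(l-\eta+\xi+\xi+\eta\right)}=e^{\pi i(2\xi+l)\cdot k},
\]
which is exactly the prefactor in the statement, while the surviving $x$-dependent exponentials combine as $e^{\pi i x\cdot l}\,e^{\pi i x\cdot(\xi+\eta)}=e^{\pi i x\cdot((\xi+l)+\eta)}$. Thus
\[
K_{\t\p}(\xi,\eta)=e^{\pi i(2\xi+l)\cdot k}\int_{\r}\p\bigl(x,\eta-(\xi+l)\bigr)\,e^{\pi i x\cdot((\xi+l)+\eta)}\,dx,
\]
and the remaining integral is, by the very definition of the Weyl kernel with $\xi$ replaced by $\xi+l$, precisely $K_{\p}(\xi+l,\eta)$. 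This proves the identity on $L^{1}\cap L^{2}(\mathbb{R}^{2n})$, and the density remark of the first paragraph extends it to all $\p\in\l$.

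The computation is routine; the only point that deserves attention is the passage from the dense subspace to all of $\l$, since for a general $L^{2}$ function the pointwise integral representation of $K_{g}$ need not make sense and the identity has to be read as an equality in $L^{2}(\mathbb{R}^{2n})$. I do not expect a genuine obstacle there: both sides are manifestly bounded in $\p$, so continuity finishes the argument.
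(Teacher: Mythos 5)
Your proof is correct. The paper itself states this lemma as a citation from \cite{saswata} without reproducing a proof, and your argument — substituting the definition of $\t\p$ into the kernel integral, shifting $x\mapsto x+k$, and collecting the phases into $e^{\pi i(2\xi+l)\cdot k}$, with the preliminary reduction to $L^{1}\cap L^{2}$ by density and boundedness of both sides — is exactly the standard direct computation that establishes it.
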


A closed subspace $V\subset \l$ is said to be twisted shift-invariant space if $f\in V$, then $\t f\in V$, for any $k,l\in \Z$, where $\t f$ denotes the twisted translation of $f$. Let $\A:=\{\p_{s} : s\in \mathbb{Z}\}$ be the countable collection of functions in $\l$ and $E^{t}(\A):=\{\t\p_{s} : k,l\in \Z, s\in \mathbb{Z}\}$. We shall denote $\Span(E^{t}(\A))$ by $U^{t}(\A)$ and $\overline{U^{t}(\A)}$ by $V^{t}(\A)$. When $A=\{\p\}$, it is called the principal twisted shift-invariant space.\\

In \cite{rabeetha_2}, we introduced Zak transform on $\l$ associated with the Weyl transform which is defined as follows : For $\p\in \l$, \[Z_{W}\p(\c,\d\,\e)=\sum\limits_{m\in \Z}K_{\p}(m+\c,\e)e^{-2\pi im\cdot \d},\hspace{3mm}\c,\d\in\T,\e\in \mathbb{R}^{n}.\] Then $Z_{W}$ turns out to be an isometric isomorphism between $\l$ and $L^{2}(\mathbb{T}^{n}\times \T\times \mathbb{R}^{n})$. The operator $Z_{W}$ is called the Weyl-Zak transform and it acts on twisted translation in the following way. \[Z_{W}\t\p(\c,\d,\e)=e^{2\pi i(k\cdot \c+l\cdot \d)}e^{\pi ik\cdot l}Z_{W}\p(\c,\d,\e),\hspace{3mm}k,l\in \Z,\c,\d\in \T,\e\in \mathbb{R}^{n}.\numberthis\label{zaktwist}\] The bracket map $\left[\p,\psi\right]$, for $\p,\psi\in \l$, is defined to be \[\left[\p,\psi\right](\c,\d)=\int_{\mathbb{R}^{n}}Z_{W} \p(\c,\d,\e)\overline{Z_{W} \psi(\c,\d,\e)}\,d\e.\] which satisfies \[\|\left[\p,\psi\right]\|_{L^{1}(\mathbb{T}^{n}\times\mathbb{T}^{n})}\leq \|\p\|_{\l}\|\psi\|_{\l}.\] We refer \cite{rabeetha_2} for further details. We make use of the following results proved in \cite{rabeetha_2} in the sequel. \begin{proposition}\label{zf=rzp}
	Let $\p\in \l$. Then $f\in V^{t}(\p)$ if and only if
	\begin{equation*}
		Z_{W}f(\c,\d,\e)=r(\c,\d)Z_{W}\p(\c,\d,\e),~\text{for a.e.}~\c,\d\in \T, \e\in \mathbb{R}^{n},
	\end{equation*}
	for some $r\in L^{2}(\T \times \T;\left[\p,\p\right]).$ Further, $\|f\|_{L^{2}(\mathbb{R}^{2n})}=\|r\|_{L^{2}(\T \times \T;\left[\p,\p\right])}.$
\end{proposition}
\begin{theorem}\label{frameequiv}
	Let $\p\in \l.$ Then $\{\t\p : k,l\in \Z\}$ is a frame sequence with frame bounds $A,B>0$ iff 
	\begin{equation*}
		0<A\leq \left[\p,\p\right](\c,\d)\leq B<\infty,\hspace{5mm}\text{for a.e.}~(\c,\d)\in\Omega_{\p},
	\end{equation*}
	where $\Omega_{\p}:=\{(\c,\d)\in \mathbb{T}^{n}\times\T : \left[\p,\p\right](\c,\d)\neq 0\}.$
\end{theorem}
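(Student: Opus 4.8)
\emph{Proof strategy.} The plan is to transport the frame inequality to the image side of the Weyl--Zak transform $Z_{W}$, where twisted translation acts as modulation by \eqref{zaktwist}, and then to recognize the frame coefficients as Fourier coefficients of the bracket function over $\T\times\T$. Fix $f\in V^{t}(\p)$. Combining the isometry of $Z_{W}$ with \eqref{zaktwist}, one obtains, for $k,l\in\Z$,
\begin{align*}
\langle f,\t\p\rangle=e^{-\pi i k\cdot l}\int_{\T\times\T}\left(\int_{\mathbb{R}^{n}}Z_{W}f(\c,\d,\e)\overline{Z_{W}\p(\c,\d,\e)}\,d\e\right)e^{-2\pi i(k\cdot\c+l\cdot\d)}\,d\c\,d\d,
\end{align*}
so that $\langle f,\t\p\rangle$ equals, up to the unimodular constant $e^{-\pi i k\cdot l}=(-1)^{k\cdot l}$, the $(k,l)$-th Fourier coefficient of $\left[f,\p\right]\in L^{1}(\T\times\T)$. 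Hence $\sum_{k,l}|\langle f,\t\p\rangle|^{2}=\sum_{k,l}\big|\widehat{\left[f,\p\right]}(k,l)\big|^{2}$, and if the Bessel bound $\sum_{k,l}|\langle f,\t\p\rangle|^{2}\le B\|f\|^{2}<\infty$ holds, then $\left[f,\p\right]$ has square-summable Fourier coefficients; since an $L^{1}$ function on the torus is determined by its Fourier coefficients, Riesz--Fischer then forces $\left[f,\p\right]\in L^{2}(\T\times\T)$ with $\sum_{k,l}|\langle f,\t\p\rangle|^{2}=\|\left[f,\p\right]\|_{L^{2}(\T\times\T)}^{2}$ by Parseval.

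Next I would parametrize $V^{t}(\p)$ via Proposition \ref{zf=rzp}: every $f\in V^{t}(\p)$ satisfies $Z_{W}f(\c,\d,\e)=r(\c,\d)Z_{W}\p(\c,\d,\e)$ for some $r\in L^{2}(\T\times\T;\left[\p,\p\right])$, and conversely. For such $f$ a direct computation gives $\left[f,\p\right]=r\left[\p,\p\right]$ on $\T\times\T$, while the isometry of $Z_{W}$ gives $\|f\|^{2}=\int_{\T\times\T}|r|^{2}\left[\p,\p\right]\,d\c\,d\d$. After these reductions, the claim that $E^{t}(\p)$ is a frame sequence with bounds $A,B$ is equivalent to the inequality
\begin{align*}
A\int_{\T\times\T}|r|^{2}\left[\p,\p\right]\,d\c\,d\d\;\le\;\int_{\T\times\T}|r|^{2}\left[\p,\p\right]^{2}\,d\c\,d\d\;\le\;B\int_{\T\times\T}|r|^{2}\left[\p,\p\right]\,d\c\,d\d
\end{align*}
holding for every measurable $r$ with $\int_{\T\times\T}|r|^{2}\left[\p,\p\right]<\infty$.

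For the direction $(\Leftarrow)$, assuming $0<A\le\left[\p,\p\right]\le B$ a.e. on $\Omega_{\p}$, I multiply this pointwise inequality by the nonnegative weight $|r|^{2}\left[\p,\p\right]$ --- which vanishes off $\Omega_{\p}$, so the integrands over all of $\T\times\T$ still agree --- and integrate to obtain the displayed two-sided estimate, hence the frame property with the stated bounds. For $(\Rightarrow)$, I argue by contradiction with indicator test functions. If the set $\{\left[\p,\p\right]>B\}$ had positive measure, then, after intersecting with $\{\left[\p,\p\right]\le N\}$ for $N$ large to keep the integrals finite, taking $r$ to be the indicator of $\{B<\left[\p,\p\right]\le N\}$ would force $\int\left[\p,\p\right]\big(\left[\p,\p\right]-B\big)\le0$ over a set on which the integrand is strictly positive, a contradiction; symmetrically, if $\{(\c,\d)\in\Omega_{\p}:\left[\p,\p\right]<A\}$ had positive measure, its indicator would violate the left-hand inequality. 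Thus $A\le\left[\p,\p\right]\le B$ a.e. on $\Omega_{\p}$.

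The step requiring the most care is the integrability bookkeeping: $\left[\p,\p\right]$ is only known a priori to lie in $L^{1}(\T\times\T)$, so one must use the Bessel bound together with Riesz--Fischer to justify passing to $L^{2}(\T\times\T)$ and applying Parseval on the torus, and one must truncate the indicator test functions so that $\int|r|^{2}\left[\p,\p\right]^{2}$ remains finite before $\left[\p,\p\right]$ is known to be essentially bounded. The remaining ingredients --- the modulation identity \eqref{zaktwist}, the bracket computations, and the final pointwise comparison --- are routine once these points are addressed.
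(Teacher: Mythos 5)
This theorem is quoted by the paper from \cite{rabeetha_2} without proof, so there is no in-paper argument to compare against; your proposal is a correct, self-contained proof along the route the paper's machinery is designed for (the Weyl--Zak isometry, the identification of $\langle f,\t\p\rangle$ with Fourier coefficients of $[f,\p]$, and the parametrization of $V^{t}(\p)$ by $r\in L^{2}(\T\times\T;[\p,\p])$ from Proposition \ref{zf=rzp}). The delicate points are handled properly: the Riesz--Fischer/uniqueness argument upgrading $[f,\p]\in L^{1}$ to $L^{2}$ when the Bessel sum is finite, the vanishing of $[\p,\p]$ off $\Omega_{\p}$ in the sufficiency direction, and the truncated indicator test functions in the necessity direction.
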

\section{The $\j-$ map and twisted shift-invariant spaces}
Now, define $\j : \l\rightarrow \L$ by \[\j f(\c,\d)(\e):=Z_{W}f(\c,\d,\e),\hspace{3mm}f\in \l,\c,\d\in\T,\e\in \mathbb{R}^{n}. \] Then $\j$ is an isometric isomorphism between $\l$ and $L^{2}(\mathbb{T}^{n}\times \T, L^{2}(\mathbb{R}^{n}))$ and $\j$ satisfies \[\j\t f(\c,\d)=e^{2\pi i(k\cdot \c+l\cdot \d)}e^{\pi ik\cdot l}\j f(\c,\d),\hspace{3mm}k,l\in \Z,\c,\d\in \T,\e\in \mathbb{R}^{n}\numberthis\label{jtwist},\] by using \eqref{zaktwist}. \\

Corresponding to this $\j$, we can define a range function as $J(\c,\d)=\overline{\Span\{\j f(\c,\d) : f\in V\}},$ for $\c,\d\in\T$. If $P(\c,\d)$ denotes the orthogonal projection of $\R$ onto $J(\c,\d)$, then the range function $\j$ is said to be measurable provided $(\c,\d)\mapsto \langle P(\c,\d)\Lambda_{1},\Lambda_{2}\rangle$ is measurable for each $\Lambda_{1},\Lambda_{2}\in \R$. The following proposition gives the connection between the image of $V$ under the map $\j$ and the range function $J$.

\begin{proposition}\label{rantau}
	For a measurable range function $J$ define $M_{J}:=\{\Phi \in L^{2}(\mathbb{T}^{n}\times \T, \R) : \Phi(\xi,\d)\in J(\xi,\d)~\text{for a.e.}~\xi,\d\in \mathbb{T}^{n}\}$. Then $M_{\j}$ and $ \j(V)$ satisfy the following properties.
	\begin{enumerate}[(i)]
		\item $M_{J}$ is a closed subspace of $\L$.
		\item $\j(V)$ is a closed subspace of $\L$. Moreover $\j(V)$ is closed under multiplication by exponentials. In other words, $\Phi\in \j(V)$ implies $e^{2\pi i\langle \cdot,k\rangle }\Phi(\cdot)\in \j(V)$, $\forall\ k\in \mathbb{Z}^{n}\times \Z$.
		\item $M_{J}=\j(V)$.
	\end{enumerate}
\end{proposition}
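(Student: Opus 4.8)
The plan is to transport everything through the isometric isomorphism $\j:\l\to\L$ and then reproduce, in the Weyl--Zak setting, the classical range-function argument of Bownik \cite{bownik}; the one genuinely new input is the twisted-translation identity \eqref{jtwist}. Throughout I would abbreviate $\h:=\L$ and $W:=\j(V)$. It is convenient first to realize the range function of $V$ concretely: fixing a countable dense subset $\{f_j:j\in\N\}$ of the separable space $V$, I would take $J(\c,\d):=\overline{\Span\{\j f_j(\c,\d):j\in\N\}}$, which is manifestly a measurable range function; that this agrees a.e.\ with $\overline{\Span\{\j f(\c,\d):f\in V\}}$ and is independent of the chosen dense set up to a null set is the same routine measure-theoretic point as in the untwisted case.

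For (i) I would observe that each fibre $J(\c,\d)$ is a closed subspace of $\R$, so $M_J$ is at once a linear subspace of $\h$; closedness is the familiar consequence of the fact that an $\h$-convergent sequence admits a subsequence converging in $\R$ for a.e.\ $(\c,\d)$, so that $\Phi_m\to\Phi$ in $\h$ with $\Phi_m\in M_J$ forces $\Phi(\c,\d)\in J(\c,\d)$ a.e., i.e.\ $\Phi\in M_J$. For (ii), $W=\j(V)$ is closed because $\j$ is an isometry and $V$ is closed; the invariance of $W$ under the characters $e^{2\pi i\langle\cdot,k\rangle}$ is exactly where \eqref{jtwist} enters: for $\Phi=\j f$ with $f\in V$ and $k=(k_1,k_2)\in\Z\times\Z$, identity \eqref{jtwist} writes $e^{2\pi i\langle(\c,\d),k\rangle}\j f(\c,\d)$ as a unimodular constant times $\j\bigl(T^{t}_{(k_1,k_2)}f\bigr)(\c,\d)$, and $T^{t}_{(k_1,k_2)}f\in V$ by twisted shift-invariance of $V$, so the product lies in $W$.

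The substance is (iii), which I would prove by two inclusions. For $W\subseteq M_J$: given $\Phi\in W$, pick $j_m$ with $\j f_{j_m}\to\Phi$ in $\h$, pass to a subsequence along which $\j f_{j_m}(\c,\d)\to\Phi(\c,\d)$ in $\R$ for a.e.\ $(\c,\d)$, and invoke that each $\j f_{j_m}(\c,\d)$ lies in the closed space $J(\c,\d)$. For $M_J\subseteq W$: suppose $\Psi\in M_J\setminus W$; since $W$ is closed, the component $\Theta$ of $\Psi$ orthogonal to $W$ is nonzero and $\langle\Psi,\Theta\rangle=\|\Theta\|^{2}>0$. By (ii), $e^{2\pi i\langle\cdot,k\rangle}\j f_j\in W$ for every $j\in\N$ and every $k\in\Z\times\Z$, so
\[\int_{\T\times\T}e^{2\pi i\langle(\c,\d),k\rangle}\bigl\langle\j f_j(\c,\d),\Theta(\c,\d)\bigr\rangle\,d\c\,d\d=0\qquad\text{for all such }k.\]
The scalar function $(\c,\d)\mapsto\bigl\langle\j f_j(\c,\d),\Theta(\c,\d)\bigr\rangle$ lies in $L^{1}(\T\times\T)$ by Cauchy--Schwarz and has all Fourier coefficients equal to $0$, hence vanishes a.e.; thus $\Theta(\c,\d)\perp\j f_j(\c,\d)$ a.e.\ for every $j$, whence $\Theta(\c,\d)\perp J(\c,\d)$ a.e. Since $\Psi\in M_J$ gives $\Psi(\c,\d)\in J(\c,\d)$ a.e., this yields $\langle\Psi,\Theta\rangle=0$, contradicting $\|\Theta\|^{2}>0$. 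Hence $M_J\subseteq W$ and (iii) follows.

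I expect the real obstacle to be the inclusion $M_J\subseteq\j(V)$: this is the only place that genuinely uses the invariance of $\j(V)$ under multiplication by exponentials, the mechanism being that this invariance converts the single orthogonality relation $\Theta\perp\j(V)$ into the fibrewise relation $\Theta(\c,\d)\perp J(\c,\d)$, through the vanishing of every Fourier coefficient of a fixed $L^{1}(\T\times\T)$ function; it is also where the passage to a countable dense subset, and the measurable-selection bookkeeping of the first paragraph, are needed. Parts (i) and (ii) and the inclusion $\j(V)\subseteq M_J$ are by contrast routine, resting only on \eqref{jtwist}, the isometry of $\j$, and a.e.-convergent subsequences.
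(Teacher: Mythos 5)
Your argument is correct and is essentially the proof the paper intends: the paper simply defers to Proposition 1.5 of \cite{bownik}, and your write-up is exactly that classical range-function argument (a.e.-convergent subsequences for closedness, the identity \eqref{jtwist} for exponential invariance, and the vanishing of all Fourier coefficients of $(\c,\d)\mapsto\langle\j f_j(\c,\d),\Theta(\c,\d)\rangle$ to fiberize the orthogonality in the inclusion $M_J\subseteq\j(V)$) transported to the twisted setting. No discrepancies to report.
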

The proof follows similar lines as in the proof of the Proposition $1.5$ in \cite{bownik}.\\ 

Now, using Proposition \ref{zf=rzp}, we obtain the following result. 

\begin{proposition}\label{jf=rjp}
	Let $\p\in \l$. Then $f\in V^{t}(\p)$ if and only if
	\begin{equation*}
		\j f(\c,\d)=r(\c,\d)\j\p(\c,\d),~\text{for a.e.}~\c,\d\in \T, \e\in \mathbb{R}^{n},
	\end{equation*}
	for some $r\in L^{2}(\T \times \T;\left[\p,\p\right]).$
\end{proposition}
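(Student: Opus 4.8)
The plan is to obtain this as an immediate reformulation of Proposition \ref{zf=rzp} via the definition of the $\j$ map. First I would recall that, by construction, $\j f(\c,\d)(\e)=Z_{W}f(\c,\d,\e)$ for every $f\in\l$ and a.e. $\c,\d\in\T$, $\e\in\mathbb{R}^{n}$; in other words, under the canonical isometric identification of $\L$ with $L^{2}(\mathbb{T}^{n}\times\mathbb{T}^{n}\times\mathbb{R}^{n})$, the vector-valued function $\j f$ corresponds to the scalar function $(\c,\d,\e)\mapsto Z_{W}f(\c,\d,\e)$, which is precisely $Z_{W}f$. Thus the statement about $\j$ and the statement about $Z_{W}$ are genuinely the same statement read in two different models of the same Hilbert space.

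Next I would note that, for a fixed $r\in L^{2}(\T\times\T;\left[\p,\p\right])$, the identity $\j f(\c,\d)=r(\c,\d)\j\p(\c,\d)$ holding for a.e. $(\c,\d)\in\T\times\T$ as an equality in $\R$ is equivalent to the scalar identity $Z_{W}f(\c,\d,\e)=r(\c,\d)Z_{W}\p(\c,\d,\e)$ holding for a.e. $(\c,\d,\e)$. Indeed, the first statement says exactly that the nonnegative function $(\c,\d)\mapsto\|\j f(\c,\d)-r(\c,\d)\j\p(\c,\d)\|_{\R}^{2}$ vanishes almost everywhere, and by Tonelli's theorem this is the same as the nonnegative function $(\c,\d,\e)\mapsto|Z_{W}f(\c,\d,\e)-r(\c,\d)Z_{W}\p(\c,\d,\e)|^{2}$ having vanishing integral over $\mathbb{T}^{n}\times\mathbb{T}^{n}\times\mathbb{R}^{n}$, i.e. being zero almost everywhere.

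Finally I would invoke Proposition \ref{zf=rzp}, which asserts precisely that $f\in V^{t}(\p)$ if and only if $Z_{W}f(\c,\d,\e)=r(\c,\d)Z_{W}\p(\c,\d,\e)$ for a.e. $\c,\d,\e$ and some $r\in L^{2}(\T\times\T;\left[\p,\p\right])$; chaining this with the equivalence of the previous paragraph yields the proposition. I do not anticipate any genuine difficulty here: the content is simply a change of viewpoint from the scalar Weyl--Zak transform $Z_{W}$ to the vector-valued map $\j$, and the only point requiring a word of justification is the harmless interchange between ``for almost every $(\c,\d)$, the $\R$-valued identity holds'' and ``for almost every $(\c,\d,\e)$, the scalar identity holds'', which is guaranteed by the isometric identification $\L\cong L^{2}(\mathbb{T}^{n}\times\mathbb{T}^{n}\times\mathbb{R}^{n})$ underlying both $Z_{W}$ and $\j$.
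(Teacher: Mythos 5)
Your proposal is correct and follows the same route as the paper, which simply derives Proposition \ref{jf=rjp} from Proposition \ref{zf=rzp} via the definition $\j f(\c,\d)(\e)=Z_{W}f(\c,\d,\e)$; your Tonelli argument just makes explicit the harmless passage between the two almost-everywhere formulations that the paper leaves implicit.
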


\begin{theorem}\label{frameequivinrange}
	The system $E^{t}(\mathcal{A})$ is a frame sequence with frame bounds $A$ and $B$ if and only if the system $\{\j \p_{s}(\xi,\d) : s\in\z\}$ is a frame sequence with the same bounds, for a.e. $\xi,\d\in \T$.
\end{theorem}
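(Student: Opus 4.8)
The plan is to carry the statement over to the fiber space $\L$ by means of the isometric isomorphism $\j$, where it becomes the standard fiberization of a frame condition for a system generated by exponential modulations. Since $\j$ is an isometric isomorphism, $E^{t}(\A)$ is a frame sequence with bounds $A,B$ (that is, a frame for $V^{t}(\A)=\overline{\Span\,E^{t}(\A)}$) if and only if $\j\bigl(E^{t}(\A)\bigr)$ is a frame for $\j\bigl(V^{t}(\A)\bigr)$ with the same bounds. By \eqref{jtwist} we have $\j\t\p_{s}(\c,\d)=e^{2\pi i(k\cdot\c+l\cdot\d)}e^{\pi ik\cdot l}\j\p_{s}(\c,\d)$, and since the scalar $e^{\pi ik\cdot l}$ is unimodular it may be dropped without affecting any frame inequality. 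Hence it suffices to prove: the modulation system $\{e^{2\pi i(k\cdot\c+l\cdot\d)}\j\p_{s} : k,l\in\Z,\ s\in\z\}$ is a frame for its closed span $\j\bigl(V^{t}(\A)\bigr)$ with bounds $A,B$ if and only if $\{\j\p_{s}(\c,\d):s\in\z\}$ is a frame for $J(\c,\d):=\overline{\Span\{\j\p_{s}(\c,\d):s\in\z\}}$ with bounds $A,B$ for a.e.\ $(\c,\d)$.

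The engine is the following computation. Fix $F\in\L$. Because $\{e^{2\pi i(k\cdot\c+l\cdot\d)}\}_{(k,l)\in\Z\times\Z}$ is an orthonormal basis of $L^{2}(\T\times\T)$, for each fixed $s$ the inner products $\langle F,e^{2\pi i(k\cdot\c+l\cdot\d)}\j\p_{s}\rangle_{\L}$ are exactly the Fourier coefficients of the scalar function $(\c,\d)\mapsto\langle F(\c,\d),\j\p_{s}(\c,\d)\rangle_{\R}$. Once one checks the Bessel-type bound that places this function in $L^{2}(\T\times\T)$ (immediate from the upper frame estimate in the ``only if'' direction, and from the fiberwise Bessel bound in the ``if'' direction), Parseval's identity followed by summation over $s$ gives
\[
\sum_{s\in\z}\ \sum_{k,l\in\Z}\bigl|\langle F,e^{2\pi i(k\cdot\c+l\cdot\d)}\j\p_{s}\rangle_{\L}\bigr|^{2}
=\int_{\T\times\T}\ \sum_{s\in\z}\bigl|\langle F(\c,\d),\j\p_{s}(\c,\d)\rangle_{\R}\bigr|^{2}\,d\c\,d\d ,
\]
while $\|F\|_{\L}^{2}=\int_{\T\times\T}\|F(\c,\d)\|_{\R}^{2}\,d\c\,d\d$. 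Comparing these two identities identifies the global frame inequalities for $F$ with the integrated form of the fiberwise ones.

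For the ``if'' part, assume the fiber systems are frames with bounds $A,B$ a.e. Given $F\in\j(V^{t}(\A))$, Proposition \ref{rantau} gives $M_{J}=\j(V^{t}(\A))$, so $F(\c,\d)\in J(\c,\d)$ a.e.; applying the fiber frame inequalities to $F(\c,\d)$ and integrating yields the frame bounds for $F$, and the modulation system is complete in $\j(V^{t}(\A))$ since $\j$ intertwines closed spans. For the ``only if'' part one argues by contraposition: if the upper (resp.\ lower) fiber bound failed on a set $E$ of positive measure, then a measurable selection of unit vectors $v(\c,\d)\in\R$ (resp.\ $v(\c,\d)\in J(\c,\d)$) witnessing the failure, cut down to a subset of $E$ of finite positive measure, defines an $F\in\L$ (resp.\ $F\in M_{J}=\j(V^{t}(\A))$) violating the corresponding global inequality, a contradiction.

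The main obstacle is the measurable-selection step in the ``only if'' direction, together with the attendant measurability of $(\c,\d)\mapsto\sum_{s}|\langle v,\j\p_{s}(\c,\d)\rangle|^{2}$ and of the fiber projections $P(\c,\d)$; these are exactly the points treated carefully in \cite{bownik} (the analogues of Lemmas~2.1--2.2 and Theorem~2.3 there), and the arguments transfer verbatim with $\ell^{2}(\Z)$ replaced by the fiber space $\R$. A minor additional point used above is the a.e.\ identity $J(\c,\d)=\overline{\Span\{\j\p_{s}(\c,\d):s\in\z\}}$, i.e.\ the fiber description of the range function of $V^{t}(\A)$; this is obtained exactly as in \cite{bownik} from Proposition \ref{rantau} together with the fact that $\j(V^{t}(\A))$ is generated by $\{\j\p_{s}\}_{s\in\z}$ under multiplication by the exponentials.
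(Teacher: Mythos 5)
Your proposal is correct and follows essentially the same route as the paper, which defers to Theorem 3.3 of \cite{rabeetha}: fiberize via the isometric isomorphism $\j$, use \eqref{jtwist} to turn twisted translates into exponential modulations, and apply Parseval on $\T\times\T$ to equate the global frame sums with the integrated fiberwise ones (this is exactly the identity the paper records as Lemma \ref{Hlemma}), finishing the necessity direction with the standard Bownik-style measurable selection. No substantive gap.
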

The proof follows similar lines as in the proof of Theorem 3.3 in \cite{rabeetha}.

\begin{theorem}\label{parsevalframeequiv}
	Let $\p\in \l$. Then $E^{t}(\p)$ is a Parseval frame for $V^{t}(\p)$ if and if $\|\j \p(\c,\d)\|_{\R}=1$, for a.e. $(\c,\d)\in \Omega_{\p}$, where $\Omega_{\p}=\{(\c,\d)\in \T\times \T : \left[\p,\p\right](\c,\d)\neq 0\}$. 
\end{theorem}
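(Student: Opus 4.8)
The plan is to derive Theorem \ref{parsevalframeequiv} as the special case $\mathcal{A}=\{\p\}$ of Theorem \ref{frameequivinrange} together with Theorem \ref{frameequiv}, by unwinding what "Parseval frame" means on both sides of the fibre decomposition. First I would apply Theorem \ref{frameequivinrange} with the single generator $\p$: the system $E^{t}(\p)$ is a frame sequence with bounds $A,B$ if and only if the one-element system $\{\j\p(\c,\d)\}$ is a frame sequence in $\R$ with the same bounds, for a.e.\ $(\c,\d)$. The crucial observation is that $E^{t}(\p)$ being a \emph{Parseval} frame for $V^{t}(\p)$ means precisely that it is a frame sequence with $A=B=1$, so it suffices to characterize when $\{\j\p(\c,\d)\}$ is a frame sequence with bounds $1,1$ for a.e.\ $(\c,\d)$.

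Next I would compute the frame condition for a single vector $v=\j\p(\c,\d)\in\R$. A one-element family $\{v\}$ is always a frame for its closed span $\overline{\Span\{v\}}$ when $v\neq 0$, with optimal bounds $A=B=\|v\|_{\R}^{2}$ (for $v=0$ the span is trivial and the condition is vacuous). Hence $\{\j\p(\c,\d)\}$ is a Parseval frame sequence exactly when either $\j\p(\c,\d)=0$ or $\|\j\p(\c,\d)\|_{\R}=1$. Now I need to match the "a.e." qualifier: by Theorem \ref{frameequiv} and the definition of the bracket map, $\left[\p,\p\right](\c,\d)=\int_{\mathbb{R}^{n}}|Z_{W}\p(\c,\d,\e)|^{2}\,d\e=\|\j\p(\c,\d)\|_{\R}^{2}$, so the set $\Omega_{\p}=\{(\c,\d):\left[\p,\p\right](\c,\d)\neq 0\}$ is exactly the set where $\j\p(\c,\d)\neq 0$. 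Therefore the condition "$\{\j\p(\c,\d)\}$ is Parseval for a.e.\ $(\c,\d)$" collapses to "$\|\j\p(\c,\d)\|_{\R}=1$ for a.e.\ $(\c,\d)\in\Omega_{\p}$", which is exactly the claimed equivalence.

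For the converse direction, one simply reverses this chain: assuming $\|\j\p(\c,\d)\|_{\R}=1$ on $\Omega_{\p}$, the fibre system $\{\j\p(\c,\d)\}$ is a Parseval frame sequence for a.e.\ $(\c,\d)$, and Theorem \ref{frameequivinrange} lifts this to the statement that $E^{t}(\p)$ is a frame sequence with bounds $1,1$, i.e.\ a Parseval frame for $V^{t}(\p)=\overline{\Span E^{t}(\p)}$. I expect the main technical point to be the careful bookkeeping of the null sets: one must ensure that the measurable function $(\c,\d)\mapsto\|\j\p(\c,\d)\|_{\R}$ genuinely agrees with $\left[\p,\p\right](\c,\d)^{1/2}$ (immediate from Tonelli applied to $|Z_{W}\p|^{2}$), and that "frame sequence with bounds $A,B$ for a.e.\ fibre" in Theorem \ref{frameequivinrange} is interpreted with $A=B=1$ only where the fibre is nonzero—outside $\Omega_{\p}$ the fibre is the zero vector and imposes no constraint. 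Once these measure-theoretic subtleties are handled, the proof is a direct specialization with no further computation.
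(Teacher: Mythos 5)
Your argument is correct, and it reaches the same one\-/line computation that drives the paper's proof, namely the identity $\|\j\p(\c,\d)\|_{\R}^{2}=\int_{\r}|Z_{W}\p(\c,\d,\e)|^{2}\,d\e=\left[\p,\p\right](\c,\d)$; but you route the frame-theoretic part differently. The paper simply specializes Theorem \ref{frameequiv} to $A=B=1$: that theorem already says $E^{t}(\p)$ is a frame sequence with bounds $A,B$ iff $A\leq\left[\p,\p\right](\c,\d)\leq B$ a.e.\ on $\Omega_{\p}$, so with $A=B=1$ the condition collapses to $\left[\p,\p\right](\c,\d)=1$, i.e.\ $\|\j\p(\c,\d)\|_{\R}=1$, on $\Omega_{\p}$, and no fibre-wise frame analysis is needed. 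You instead invoke Theorem \ref{frameequivinrange} with the single generator $\p$ and then work out by hand that a one-element family $\{v\}$ is a frame for its span with optimal bounds $\|v\|^{2}$ (vacuous when $v=0$), which is a correct and elementary observation; your identification of $\Omega_{\p}$ with the set where $\j\p(\c,\d)\neq 0$ then closes the argument. Your route is slightly longer but makes the fibre-wise Parseval structure explicit, which is the viewpoint used later in Theorem \ref{characterizationtheorem}; the paper's route is shorter because Theorem \ref{frameequiv} has already packaged the fibre analysis into the scalar function $\left[\p,\p\right]$. One small inaccuracy: the identity $\left[\p,\p\right](\c,\d)=\|\j\p(\c,\d)\|_{\R}^{2}$ follows from the definitions of the bracket map and of $\j$ alone (via Tonelli, as you note), not from Theorem \ref{frameequiv}, which you cite for it.
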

\begin{proof}
	We observe that \[\|\j \p(\c,\d)\|_{\R}^{2}=\int\limits_{\r}|Z_{W}\p(\c,\d,\e)|^{2}d\e=\left[\p,\p\right](\c,\d).\] Then the result follows from Theorem \ref{frameequiv}.
\end{proof}

\begin{remark}
	If $\p$ is a Parseval frame generator of $\v(\p)$ and $m\in L^{2}(\T\times \T)$, then $m\j\p\in \L.$ In fact, using Theorem \ref{parsevalframeequiv}, we get
	\begin{align*}
		\int\limits_{\T}\int\limits_{\T}\|m\j\p(\c,\d)\|^{2}_{\R}\,d\d d\c&=\int\limits_{\T}\int\limits_{\T}|m(\c,\d)|^{2}\|\j\p(\c,\d)\|^{2}_{\R}\,d\d d\c\\
		&=\iint\limits_{\Omega_{\p}}|m(\c,\d)|^{2}\,d\d d\c\leq \|m\|^{2}_{L^{2}(\T\times \T)}<\infty.
	\end{align*}
\end{remark}

\begin{proposition}\label{orthospaceequi}
	The spaces $\v(\p_{1})$ and $\v(\p_{2})$ are orthogonal if and only if \[\langle \j\p_{1}(\c,\d),\j\p_{2}(\c,\d)\rangle_{\R}=0,\] for a.e. $\c,\d\in \T.$
\end{proposition}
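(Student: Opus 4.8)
The plan is to route everything through the bracket map. First I would record the elementary identity
\[
\langle \j\p_{1}(\c,\d),\j\p_{2}(\c,\d)\rangle_{\R}=\int_{\r}Z_{W}\p_{1}(\c,\d,\e)\overline{Z_{W}\p_{2}(\c,\d,\e)}\,d\e=\left[\p_{1},\p_{2}\right](\c,\d),
\]
so that the condition to be characterized is exactly $\left[\p_{1},\p_{2}\right]=0$ a.e. on $\T\times\T$. The second ingredient is that $\j$ is an isometric isomorphism, which after polarization gives
\[
\langle f,g\rangle_{\l}=\I\I\langle \j f(\c,\d),\j g(\c,\d)\rangle_{\R}\,d\d\,d\c,\qquad f,g\in\l,
\]
to be combined with the twisted-translation rule \eqref{jtwist}.

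For the ``only if'' direction I would assume $\v(\p_{1})\perp\v(\p_{2})$ and test against the twisted translates of $\p_{1}$: since $\t\p_{1}\in\v(\p_{1})$ and $\p_{2}\in\v(\p_{2})$, orthogonality forces $\langle \t\p_{1},\p_{2}\rangle_{\l}=0$ for every $k,l\in\Z$. Expanding the left-hand side with the two displays above and \eqref{jtwist} yields
\[
0=e^{\pi ik\cdot l}\I\I e^{2\pi i(k\cdot\c+l\cdot\d)}\left[\p_{1},\p_{2}\right](\c,\d)\,d\d\,d\c .
\]
Since $e^{\pi ik\cdot l}\neq 0$, this says that every Fourier coefficient of the function $\left[\p_{1},\p_{2}\right]\in L^{1}(\T\times\T)$ vanishes, hence $\left[\p_{1},\p_{2}\right]=0$ a.e., which is the asserted identity.

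For the ``if'' direction I would take arbitrary $f\in\v(\p_{1})$ and $g\in\v(\p_{2})$ and apply Proposition \ref{jf=rjp} to write $\j f(\c,\d)=r_{1}(\c,\d)\j\p_{1}(\c,\d)$ and $\j g(\c,\d)=r_{2}(\c,\d)\j\p_{2}(\c,\d)$ for suitable $r_{1}\in L^{2}(\T\times\T;\left[\p_{1},\p_{1}\right])$ and $r_{2}\in L^{2}(\T\times\T;\left[\p_{2},\p_{2}\right])$. Then the fiberwise inner product equals $r_{1}(\c,\d)\overline{r_{2}(\c,\d)}\,\langle \j\p_{1}(\c,\d),\j\p_{2}(\c,\d)\rangle_{\R}$, which vanishes for a.e. $(\c,\d)$ by hypothesis; integrating over $\T\times\T$ gives $\langle f,g\rangle_{\l}=0$, and since $f,g$ are arbitrary, $\v(\p_{1})\perp\v(\p_{2})$. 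The only spot that needs a moment of care is in the ``only if'' part, namely recognizing the scalars $\langle\t\p_{1},\p_{2}\rangle$, $k,l\in\Z$, as a unimodular multiple of the full array of Fourier coefficients of the integrable function $\left[\p_{1},\p_{2}\right]$ on $\T\times\T$, so that their vanishing really does force the bracket to be zero a.e.; the remaining steps are direct computations from the isometry property of $\j$ and Proposition \ref{jf=rjp}.
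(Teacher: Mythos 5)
Your proof is correct and follows essentially the same route as the paper: both arguments rest on identifying the fiberwise inner product $\langle \j\p_{1}(\c,\d),\j\p_{2}(\c,\d)\rangle_{\R}$ with the bracket $\left[\p_{1},\p_{2}\right](\c,\d)$ and recognizing the numbers $\langle \t\p_{1},\p_{2}\rangle_{\l}$, via \eqref{jtwist}, as unimodular multiples of the Fourier coefficients of this $L^{1}(\T\times\T)$ function, whose vanishing forces the bracket to be zero a.e. The only (harmless) variation is in the converse: you parametrize elements of the principal spaces through Proposition \ref{jf=rjp}, whereas the paper reduces orthogonality of the spaces to $\langle\p_{1},\t\p_{2}\rangle_{\l}=0$ for all $k,l$ using the composition rule \eqref{comptsttrns} together with density of the span of twisted translates; both are valid.
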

\begin{proof}
	By \eqref{comptsttrns}, it is enough to prove that $\v(\p_{1})$ and $\v(\p_{2})$ are orthogonal if and only if \[\langle \p_{1},\t\p_{2}\rangle_{\l}=0,\hspace{2mm}\forall~k,l\in \Z.\] Using the fact that $\j$ is an isometric isomorphism, we get \[\langle \p_{1},\t\p_{2}\rangle_{\l}=\int\limits_{\T}\int\limits_{\T}\langle \j\p_{1}(\c,\d),\j\t\p_{2}(\c,\d)\rangle_{\R}\,d\d d\c.\] Using \eqref{jtwist} and the fact that $\{e^{2\pi i(k\cdot \c+l\cdot \d)}e^{\pi ik\cdot l} : k,l\in\Z\}$ is an orthonormal basis for $L^{2}(\mathbb{T}^{n}\times \mathbb{T}^{n})$, we get \[\langle \p_{1},\t\p_{2}\rangle_{\l}=0,~\forall~k,l\in \Z\hspace{3mm}\text{iff}\hspace{3mm}\langle \j\p_{1}(\c,\d),\j\p_{2}(\c,\d)\rangle_{\R}=0,~\text{for a.e.}~\c,\d\in \T.\]
\end{proof}
\begin{lemma}\label{construofparseval}
	Let $\p\in \l$. Define $\psi\in \l$ by \[\j\psi(\c,\d)=\begin{cases}
	\frac{\j\p(\c,\d)}{\sqrt{\left[\p,\p\right](\c,\d)}} & \text{if}~(\c,\d)\in \Omega_{\p},\\
	0 & \text{otherwise},
	\end{cases}\] where $\Omega_{\p}=\{(\c,\d)\in \T\times \T : \left[\p,\p\right](\c,\d)\neq 0\}$. Then $\{\t\psi : k,l\in \Z\}$ is a Parseval frame for $\v(\p)$. Further, $\v(\p)=\v(\psi)$.
\end{lemma}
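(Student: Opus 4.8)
The plan is to verify three things in order: first that $\psi$ is a well-defined element of $\l$, second that $E^{t}(\psi)$ is a Parseval frame for its closed twisted shift-invariant span, and third that this span coincides with $\v(\p)$, which simultaneously upgrades ``Parseval frame for $\v(\psi)$'' to ``Parseval frame for $\v(\p)$''.

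For well-definedness, I would compute $\|\j\psi\|^{2}_{\L}=\iint_{\Omega_{\p}}\frac{\|\j\p(\c,\d)\|_{\R}^{2}}{\left[\p,\p\right](\c,\d)}\,d\d\,d\c$ and use the identity $\|\j\p(\c,\d)\|_{\R}^{2}=\left[\p,\p\right](\c,\d)$ established in the proof of Theorem \ref{parsevalframeequiv}; the integrand is then $1$ on $\Omega_{\p}$, so $\|\j\psi\|^{2}_{\L}=|\Omega_{\p}|\le|\T\times\T|<\infty$. Since $\j$ is an isometric isomorphism, the prescribed $\j\psi$ determines a unique $\psi\in\l$, and measurability of $\j\psi$ follows from measurability of $\j\p$ and of $(\c,\d)\mapsto\left[\p,\p\right](\c,\d)$ together with measurability of $\Omega_{\p}$.

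Next, to see that $E^{t}(\psi)$ is a Parseval frame for $\v(\psi)$, I would appeal to Theorem \ref{parsevalframeequiv}: it suffices to check that $\|\j\psi(\c,\d)\|_{\R}=1$ for a.e.\ $(\c,\d)\in\Omega_{\psi}$, where $\Omega_{\psi}=\{(\c,\d):\left[\psi,\psi\right](\c,\d)\neq0\}$. But for $(\c,\d)\in\Omega_{\p}$ we have $\|\j\psi(\c,\d)\|_{\R}=\frac{\|\j\p(\c,\d)\|_{\R}}{\sqrt{\left[\p,\p\right](\c,\d)}}=\frac{\sqrt{\left[\p,\p\right](\c,\d)}}{\sqrt{\left[\p,\p\right](\c,\d)}}=1$, while for $(\c,\d)\notin\Omega_{\p}$ we have $\j\psi(\c,\d)=0$; hence $\left[\psi,\psi\right](\c,\d)=\|\j\psi(\c,\d)\|_{\R}^{2}$ equals $1$ on $\Omega_{\p}$ and $0$ off it, so $\Omega_{\psi}=\Omega_{\p}$ up to a null set and the norm condition holds.

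Finally, for $\v(\p)=\v(\psi)$, I would use Proposition \ref{jf=rjp} (and its evident analogue with the roles of $\p,\psi$ interchanged). On $\Omega_{\p}$ the function $r_{1}(\c,\d):=\sqrt{\left[\p,\p\right](\c,\d)}$ satisfies $\j\p(\c,\d)=r_{1}(\c,\d)\,\j\psi(\c,\d)$, and $r_{1}\in L^{2}(\T\times\T;\left[\psi,\psi\right])$ since $\left[\psi,\psi\right]=\chi_{\Omega_{\p}}$ and $r_{1}$ is bounded on $\T\times\T$; by Proposition \ref{jf=rjp} this gives $\p\in\v(\psi)$, hence $\v(\p)\subseteq\v(\psi)$. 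Conversely, $r_{2}(\c,\d):=\left[\p,\p\right](\c,\d)^{-1/2}\chi_{\Omega_{\p}}(\c,\d)$ satisfies $\j\psi(\c,\d)=r_{2}(\c,\d)\,\j\p(\c,\d)$ a.e., and $r_{2}\in L^{2}(\T\times\T;\left[\p,\p\right])$ because $\iint_{\Omega_{\p}}|r_{2}|^{2}\left[\p,\p\right]=|\Omega_{\p}|<\infty$; so $\psi\in\v(\p)$, giving the reverse inclusion. The main obstacle, such as it is, is keeping the membership of the multiplier functions in the correct weighted $L^{2}$ spaces straight and handling the null set $\{\left[\p,\p\right]=0\}$ cleanly; everything else is a direct application of the already-established fiberwise dictionary.
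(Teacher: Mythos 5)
Your argument is, in all essentials, the paper's own proof: define $\j\psi$ fiberwise via surjectivity of $\j$, check that $\left[\psi,\psi\right]=\|\j\psi(\cdot,\cdot)\|_{\R}^{2}=\chi_{\Omega_{\p}}$ so that $\Omega_{\psi}=\Omega_{\p}$ and Theorem \ref{parsevalframeequiv} applies, and then get the two inclusions between $\v(\p)$ and $\v(\psi)$ from Proposition \ref{jf=rjp} with the multipliers $\left[\p,\p\right]^{-1/2}\chi_{\Omega_{\p}}$ and $\left[\p,\p\right]^{1/2}$. One justification should be repaired: you claim $r_{1}=\sqrt{\left[\p,\p\right]}$ is \emph{bounded} on $\T\times\T$, but the lemma assumes only $\p\in\l$, so $\left[\p,\p\right]$ is merely integrable (with $\|\left[\p,\p\right]\|_{L^{1}(\T\times\T)}\leq\|\p\|_{\l}^{2}$) and need not be essentially bounded --- that would amount to an extra Bessel-type hypothesis on $E^{t}(\p)$. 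The membership you actually need, $r_{1}\in L^{2}(\T\times\T;\left[\psi,\psi\right])$, is nevertheless immediate from integrability alone, since $\iint_{\T\times\T}|r_{1}|^{2}\left[\psi,\psi\right]\,d\d\,d\c=\iint_{\Omega_{\p}}\left[\p,\p\right]\,d\d\,d\c\leq\|\p\|_{\l}^{2}<\infty$; with that one sentence replaced, the proof is complete and matches the paper's.
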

\begin{proof}
	Define $\Phi\in\L$ by \[\Phi(\c,\d)=\begin{cases}
	\frac{\j\p(\c,\d)}{\sqrt{\left[\p,\p\right](\c,\d)}} & \text{if}~(\c,\d)\in \Omega_{\p},\\
	0 & \text{otherwise},
	\end{cases}.\] Since $\j$ is surjective, there exists $\psi\in \l$ such that $\j\psi=\Phi$. Moreover, we have \[\left[\psi,\psi\right](\c,\d)=\|\j \psi(\c,\d)\|^{2}_{\l}=\chi_{\Omega_{\p}}(\c,\d),\] which leads to $\Omega_{\psi}=\Omega_{\p}$. Then it follows from Theorem \ref{parsevalframeequiv}, that the system $\{\t\psi : k,l\in \Z\}$ is a Parseval frame for $\v(\psi)$. In order to complete the proof, we need to show that $\v(\psi)=\v(\p)$. Define $r : \T\times \T\rightarrow \mathbb{C}$ by $r(\c,\d)=(\left[\p,\p\right](\c,\d))^{-\frac{1}{2}}\chi_{\Omega_{\p}}(\c,\d).$ Clearly $r\in \Ll$ and $\j\psi(\c,\d)=r(\c,\d)\j\p(\c,\d),$ for $\psi\in \l$. Then, by using Proposition \ref{jf=rjp}, $\psi\in \v(\p)$, which implies that $\v(\psi)\subset \v(\p)$. Now, define $r^{'} : \T\times \T\rightarrow \mathbb{C}$ by $r^{'}(\c,\d)=(\left[\p,\p\right](\c,\d))^{\frac{1}{2}}.$ Then $r^{'}\in L^{2}(\T\times \T;\left[\psi,\psi\right])$ and $\j\phi(\c,\d)=r(\c,\d)\j\psi(\c,\d),$ for $\p\in \l$, which leads to $\v(\phi)\subset \v(\psi)$, thus proving our assertion.
	\end{proof}

\begin{theorem}\label{decomposition}
	If $V$ is a twisted shift-invariant subspace in $\l$, then there exists a collection of functions $\{\p_{i}\}_{i\in \N}$ in $\l$ such that $V=\bigoplus\limits_{i\in\N}\v(\p_{i})$, where each $\p_{i}$ is a Parseval frame generator of $\v(\p_{i})$, for $i\in\N$. Moreover, if $f\in V$, then $\|f\|^{2}_{2}=\sum\limits_{i\in \mathbb{N}}\|r_{i}\|^{2}_{L^{2}(\mathbb{T}^{n}\times \mathbb{T}^{n}; \left[\phi_{i},\phi_{i}\right])},$ where $r_{i}\in L^{2}(\mathbb{T}^{n}\times \mathbb{T}^{n}; \left[\phi_{i},\phi_{i}\right]).$
\end{theorem}
\begin{proof}
	Let $0\neq \psi\in V$. Then, by applying Lemma \ref{construofparseval}, we obtain $\p\in \v(\psi)$ such that $\p$ be a parseval frame generator of $\v(\p)$. Since $\v(\p)$ is a closed subspace of $V$, we have $V=\v(\p)\oplus\v(\p)^{\perp}$. By applying the above argument to $\v(\p)^{\perp}$ in place of $V$, we obtain a collection of mutually orthogonal principal twisted shift-invariant subspaces of $V$. By Zorn's lemma, there exists a maximal collection $\{\v(\p_{\alpha})\}_{\alpha\in I}$ such that $\p_{\alpha}$ is a Parseval frame generator of $\v(\p_{\alpha})$, for each $\alpha\in I.$ Suppose $V\neq \bigoplus\limits_{\alpha\in I}\v(\p_{\alpha})$, then there exists $0\neq g\in V$ such that the principal twisted shift-invariant space $\v(g)$ is orthogonal to $\v(\p_{\alpha})$, for every $\alpha\in I$, which is a contradiction to the maximality of the collection. Hence, we obtain $V=\bigoplus\limits_{\alpha\in I}\v(\p_{\alpha})$. Clearly $I$ is countable as $\l$ is separable. Let $f\in V$. Then $f=\sum\limits_{i\in \mathbb{N}}f_{i}$, where $f_{i}\in V^{t}(\phi_{i})$. Furthermore, $\|f\|^{2}_{L^{2}(\mathbb{R}^{2n})}=\sum\limits_{i\in \mathbb{N}}\|f_{i}\|^{2}_{L^{2}(\mathbb{R}^{2n})}$. From the Proposition \ref{zf=rzp}, we get $\|f_{i}\|^{2}_{L^{2}(\mathbb{R}^{2n})}=\|r_{i}\|^{2}_{L^{2}(\mathbb{T}^{n}\times \mathbb{T}^{n},\left[\phi_{i},\phi_{i}\right])},$ where $r_{i}\in L^{2}(\mathbb{T}^{n}\times \mathbb{T}^{n},\left[\phi_{i},\phi_{i}\right])$, which gives that $\|f\|^{2}_{L^{2}(\mathbb{R}^{2n})}=\sum\limits_{i\in \mathbb{N}}\|r_{i}\|^{2}_{L^{2}(\mathbb{T}^{n}\times \mathbb{T}^{n},\left[\phi_{i},\phi_{i}\right])}.$  
\end{proof}

\section{Characterization for a twisted shift preserving operator}
In this section, we introduce a twisted shift preserving operator and find a necessary and sufficient condition for a bounded linear operator to be a twisted shift preserving operator.
\begin{definition}
	A bounded linear operator $U : \l\rightarrow\l$ is called a twisted shift preserving operator if $U\t=\t U$ for all $k,l\in \Z,$ where $\t$ is a twisted translation operator.
\end{definition}
\begin{example}\label{example}
	Let $\p\in L^{\infty}(\mathbb{R}^{2})$ be a ``$1$'' periodic function. Define a multiplication operator $M_{\p} : L^{2}(\mathbb{R}^{2})\rightarrow L^{2}(\mathbb{R}^{2})$ by $M_{\phi}f=\p f$. Now,
	\begin{align*}
		\t M_{\p}f(x,y)&=e^{\pi i(xl-yk)}M_{\p}f(x-k,y-l)\\
		&=e^{\pi i(xl-yk)}\p(x-k,y-l)f(x-k,y-l)\\
		&=e^{\pi i(xl-yk)}\p(x,y)f(x-k,y-l)\\
		&=\p(x,y)\t f(x,y)\\
		&=M_{\p}\t f(x,y),
	\end{align*}
for any $f\in L^{2}(\mathbb{R}^{2})$, which shows that $M_{\p}$ is a twisted shift preserving operator.
\end{example}
\begin{definition}
	Suppose $V$ is a twisted shift-invariant subspace of $\l$ with the range function $J$ and associated projection $P$. A range operator on $J$ is a mapping \[R : \T\times \T\rightarrow \{\text{bounded linear operators on closed subspaces of}~\R\},\] so that the domain of $R(\c,\d)$ equals $J(\c,\d)$ for a.e. $\c,\d\in \T.$
\end{definition} The operator $R$ is called measurable if $(\c,\d)\mapsto \langle R(\c,\d)P(\c,\d)a, b\rangle $ is a measurable scalar function for all $a,b\in \R.$
Now, we prove the following lemma which can be used to prove the characterization theorem for a twisted shift preserving operator.

\begin{lemma}\label{characlemma}
Let $\p\in \l$ be a Parseval frame generator of $\v(\p)$ and $U : \v(\p)\rightarrow\l$ be a twisted shift preserving operator. Then for every $m\in L^{2}(\T\times \T),$ \[(\j\circ U\circ \J)(m\j \p)(\c,\d)=m(\c,\d)(\j\circ U)\p(\c,\d),\] for a.e. $\c,\d\in \T.$
\end{lemma}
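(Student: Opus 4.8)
The plan is to reduce the identity to the case of exponential multipliers $m(\xi,\xi') = e^{2\pi i(k\cdot\xi + l\cdot\xi')}$ and then pass to general $m \in L^2(\mathbb{T}^n\times\mathbb{T}^n)$ by linearity and a density/continuity argument. First I would observe that since $\p$ is a Parseval frame generator of $V^t(\p)$, the remark following Theorem~\ref{parsevalframeequiv} guarantees $m\j\p \in \L$ for every $m \in L^2(\T\times\T)$, so $\J(m\j\p)$ is a well-defined element of $\l$; moreover by Proposition~\ref{jf=rjp} this element lies in $V^t(\p)$, so applying $U$ to it makes sense. The key computational input is the twisted-translation formula \eqref{jtwist}: $\j\t f(\xi,\xi') = e^{2\pi i(k\cdot\xi + l\cdot\xi')}e^{\pi i k\cdot l}\j f(\xi,\xi')$. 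Taking $f = \p$ this says precisely that $\J\big(e^{2\pi i(k\cdot\xi+l\cdot\xi')}e^{\pi i k\cdot l}\j\p\big) = \t\p$, i.e. the exponential multiplier $m_{k,l} := e^{2\pi i(k\cdot\xi+l\cdot\xi')}e^{\pi i k\cdot l}$ applied to $\j\p$ corresponds under $\J$ to the twisted translate $\t\p$.

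Next I would verify the identity for $m = m_{k,l}$. On one side, $(\j\circ U\circ\J)(m_{k,l}\j\p) = (\j\circ U)(\t\p) = \j(\t U\p)$, using that $U$ is twisted shift preserving, and then applying \eqref{jtwist} again (now to $f = U\p$) gives $\j(\t U\p)(\xi,\xi') = e^{2\pi i(k\cdot\xi+l\cdot\xi')}e^{\pi i k\cdot l}\,(\j\circ U)\p(\xi,\xi') = m_{k,l}(\xi,\xi')\,(\j\circ U)\p(\xi,\xi')$. So the identity holds for every exponential, hence by linearity for every trigonometric polynomial $m$.

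For general $m \in L^2(\T\times\T)$, I would pick trigonometric polynomials $m_N \to m$ in $L^2(\T\times\T)$ and control both sides. For the right-hand side: $\|(m - m_N)(\j\circ U)\p\|_{\L}^2 = \int_{\T}\int_{\T}|m - m_N|^2\|(\j\circ U)\p\|_{\R}^2$, and since $(\j\circ U)\p \in \L$ its fiber norm is integrable, but to get $L^2$-convergence one uses that $(\j\circ U)\p(\xi,\xi') \in J(\xi,\xi')$ lies in the image of $V^t(\p)$, so by Proposition~\ref{jf=rjp} it equals $r(\xi,\xi')\j\p(\xi,\xi')$ for some $r \in L^2(\T\times\T;[\p,\p])$; combined with $\|\j\p(\xi,\xi')\|_\R^2 = [\p,\p](\xi,\xi') \le 1$ on $\Omega_\p$ (Theorem~\ref{parsevalframeequiv}), the multiplier $m - m_N$ against $(\j\circ U)\p$ converges to $0$ in $\L$. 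For the left-hand side: the same Parseval bound gives $\|(m-m_N)\j\p\|_{\L} \le \|m - m_N\|_{L^2(\T\times\T)} \to 0$, hence $\J((m-m_N)\j\p) \to 0$ in $\l$ since $\J$ is an isometry, and then $U$ bounded plus $\j$ isometric gives $(\j\circ U\circ\J)((m-m_N)\j\p) \to 0$ in $\L$. Passing to the limit on both sides yields the claimed identity for a.e. $\xi,\xi'$.

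The main obstacle I anticipate is the convergence argument on the right-hand side: one must be careful that $m\,(\j\circ U)\p$ genuinely defines an element of $\L$ and that multiplication by an $L^2$ (not $L^\infty$) function behaves well, which is exactly where the Parseval normalization $\|\j\p(\xi,\xi')\|_\R \le 1$ and the representation of $(\j\circ U)\p$ as $r\,\j\p$ are essential — without the hypothesis that $\p$ is a Parseval frame generator, $\|\j\p(\xi,\xi')\|_\R$ could be unbounded and neither side would be controlled by $\|m\|_{L^2}$. A minor secondary point is checking measurability of $(\xi,\xi') \mapsto m(\xi,\xi')(\j\circ U)\p(\xi,\xi')$ as a vector-valued function, which follows from approximation by the polynomial case.
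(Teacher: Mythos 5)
Your overall route is the same as the paper's: verify the identity on exponentials via \eqref{jtwist} and the commutation $U\t=\t U$, extend by linearity to trigonometric polynomials, and pass to general $m\in L^{2}(\T\times\T)$ by density. The exponential/polynomial step and your treatment of the left-hand side (the Parseval bound $\|\j\p(\c,\d)\|_{\R}\le 1$ gives $\|(m-m_N)\j\p\|_{\L}\le\|m-m_N\|_{L^{2}(\T\times\T)}$, then isometry of $\J$, boundedness of $U$, isometry of $\j$) are correct, and your observation that $\J(m\j\p)$ actually lies in $\v(\p)$ --- so that $U$ may legitimately be applied to it --- is a point worth making explicit.

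The gap is in your control of the right-hand side. You write $(\j\circ U)\p(\c,\d)=r(\c,\d)\j\p(\c,\d)$ with $r\in L^{2}(\T\times\T;[\p,\p])$ by invoking Proposition \ref{jf=rjp}; but that proposition applies to elements of $\v(\p)$, and $U\p$ need not belong to $\v(\p)$ --- the hypothesis is only $U:\v(\p)\to\l$, and a twisted shift preserving operator can carry $\v(\p)$ onto a different (even orthogonal) twisted shift-invariant space, so $(\j\circ U)\p(\c,\d)$ need not lie in $J(\c,\d)$. Moreover, even if such a representation held, $r\in L^{2}$ gives no control: $\I\I|m-m_N|^{2}|r|^{2}\|\j\p\|_{\R}^{2}$ is the integral of a product of two $L^{1}$ functions, which need not be finite, let alone tend to $0$ with $\|m-m_N\|_{L^{2}}$. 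What is actually needed is a fiberwise a.e.\ bound $\|(\j\circ U)\p(\c,\d)\|_{\R}\le\|U\|\,\|\j\p(\c,\d)\|_{\R}$; this is the paper's inequality \eqref{jou<j}, obtained by first proving $\I\I|p|^{2}\|(\j\circ U)\p\|_{\R}^{2}\le\|U\|^{2}\I\I|p|^{2}\|\j\p\|_{\R}^{2}$ for trigonometric polynomials $p$, upgrading it to all multipliers in $L^{\infty}(\T\times\T)$ via Lusin's theorem and dominated convergence, and then localizing. Alternatively, you can avoid norm convergence of the right-hand side entirely: choose the approximating polynomials so that $m_N\to m$ a.e.; then $m_N(\c,\d)(\j\circ U)\p(\c,\d)\to m(\c,\d)(\j\circ U)\p(\c,\d)$ pointwise a.e.\ trivially, while the left-hand side converges in $\L$ and hence a.e.\ along a subsequence, and uniqueness of a.e.\ limits finishes the proof. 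Either repair works; as written, the step does not.
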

\begin{proof}
	As $U$ is a twisted shift preserving operator, by making use of \eqref{jtwist}, we get \begin{align*}
		(\j \circ U\circ \J)(e^{2\pi i(k\cdot +l\cdot)}\j \p)(\c,\d)&=(\j \circ U\circ \J)(e^{-\pi ik\cdot l}\j \t\p)(\c,\d)\\
		&=e^{-\pi ik\cdot l}(\j\circ U\circ \t)\p(\c,\d)\\
		&=e^{-\pi ik\cdot l}\j\t(U\p)(\c,\d)\\
		&=e^{2\pi i(k\cdot\c +l\cdot\d)}(\j\circ U)\p(\c,\d).\numberthis\label{characlemmaexpo}
	\end{align*}
	By linearity, \eqref{characlemmaexpo} holds for all trigonometric polynomials in $L^{2}(\T\times \T)$. That is, \[(\j \circ U\circ \J)(p\j \p)(\c,\d)=p(\c,\d)(\j\circ U)\p(\c,\d).\] Moreover, since $\j$ is an isometric isomorphism, we get \begin{align*}
		\I\I |p(\c,\d)|^{2}\|(\j\circ U)\p(\c,\d)\|^{2}_{\R}\,d\d d\c
		&\leq \|U\|^{2}\|P\j \p\|^{2}_{\L}\\
		&=\|U\|^{2}\I\I |p(\c,\d)|^{2}\|\j \p(\c,\d)\|^{2}_{\R}\,d\d d\c,
	\end{align*}
	for any trigonometric polynomial $p$ in $L^{2}(\T\times \T).$ Let $r\in L^{\infty}(\T\times \T)\subset L^{2}(\T\times \T)$. By Lusin's theorem, there exists a sequence of polynomials $\{p_{i}\}_{i\in\N}$ such that $\|p_{i}\|_{L^{\infty}(\T\times \T)}\leq \|r\|_{L^{\infty}(\T\times \T)}$ and $p_{i}(\c,\d)\to r(\c,\d)$ as $i\to \infty,$ for a.e. $\c,\d \in \T.$ Then, by Lebsegue dominated convergence theorem, we get \[\I\I |r(\c,\d)|^{2}\|(\j\circ U)\p(\c,\d)\|^{2}_{\R}\,d\d d\c\leq \|U\|^{2}\I\I |r(\c,\d)|^{2}\|\j \p(\c,\d)\|^{2}_{\R}\,d\d d\c.\] Since the above inequality is true for any $r\in L^{\infty}(\T\times \T)$, we get \[\|(\j\circ U)\p(\c,\d)\|_{\R}\leq \|U\|\|\j\p(\c,\d)\|_{\R},\numberthis\label{jou<j}\] for a.e. $\c,\d\in \T.$ Let $m\in L^{2}(\T\times \T)$. Then there exists a sequence of polynomials $\{p_{i}\}_{i\in \N}$ such that $p_{i}(\c,\d)\to m(\c,\d)$ as $i\to \infty,$ for a.e. $\c,\d \in \T.$ Further, $(\j\circ U\circ \J)(p_{i}\j\p)(\c,\d) \to (\j\circ U\circ \J)(m\j\p)(\c,\d)$ as $i\to \infty$ for a.e. $\c,\d\in \T$. Thus it follows from \eqref{jou<j} that $p_{i}(\c,\d)(\j\circ U)\p(\c,\d)\to m(\c,\d)(\j\circ U)\p(\c,\d)$ as $i\to \infty$ for a.e. $\c,\d\in \T$. This in turn implies that \[(\j\circ U\circ \J)(m\j \p)(\c,\d)=m(\c,\d)(\j\circ U)\p(\c,\d),\] for a.e. $\c,\d\in \T$, proving our assertion.  
\end{proof}
\begin{theorem}\label{characterizationtheorem}
	Suppose $V\subset \l$ is a twisted shift-invariant subspace and J is its associated range function. For every twisted shift preserving operator $U : V\rightarrow \l$, there exists a measurable range operator $R$ on $J$ such that \[(\j\circ U)f(\c,\d)=R(\c,\d)(\j f(\c,\d)),\hspace{3mm}\text{for a.e.}~\c,\d\in \T,f\in V.\numberthis\label{shiftoperaequivcondi}\] Conversely, given a measurable range operator $R$ on $J$ with $\esssup\limits_{\c,\d\in \T}\|R(\c,\d)\|_{\R}<\infty,$ there is a twisted shift preserving operator $U : V\rightarrow \l$ such that \eqref{shiftoperaequivcondi} holds. The correspondance between $U$ and $R$ is one-one under the usual conventiom that the range operators are identified if they are equal a.e.
\end{theorem}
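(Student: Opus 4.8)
The plan is to follow the strategy used for the analogous characterization of shift preserving operators on $\R$ in \cite{bownik}, with Lemma~\ref{characlemma} and the commutation relation \eqref{jtwist} playing the roles of their unweighted counterparts. The starting point is to fix a convenient generating system: by Theorem~\ref{decomposition} write $V=\bigoplus_{i\in\N}\v(\p_{i})$ with each $\p_{i}$ a Parseval frame generator of $\v(\p_{i})$. By Proposition~\ref{orthospaceequi} the vectors $\j\p_{i}(\c,\d)$, $i\in\N$, are mutually orthogonal for a.e.\ $(\c,\d)$, and by Theorem~\ref{parsevalframeequiv} each has norm $1$ on $\Omega_{\p_{i}}$ and norm $0$ elsewhere; combining this with Proposition~\ref{rantau} applied to $V$ and to each $\v(\p_{i})$ yields $J(\c,\d)=\overline{\Span\{\j\p_{i}(\c,\d):i\in\N\}}$ for a.e.\ $(\c,\d)$, so that $\{\j\p_{i}(\c,\d):(\c,\d)\in\Omega_{\p_{i}}\}$ is an orthonormal basis of $J(\c,\d)$. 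Every $f\in V$ then decomposes as $f=\sum_{i}f_{i}$ with $f_{i}\in\v(\p_{i})$, and by Proposition~\ref{jf=rjp} one has $\j f_{i}=m_{i}\j\p_{i}$ for some $m_{i}\in\Ll$, whence $\j f(\c,\d)=\sum_{i}m_{i}(\c,\d)\j\p_{i}(\c,\d)$ and $\|f\|_{\l}^{2}=\sum_{i}\|m_{i}\|_{L^{2}}^{2}$.

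For the forward direction, given a twisted shift preserving operator $U:V\to\l$, set $R(\c,\d)\j\p_{i}(\c,\d):=(\j\circ U)\p_{i}(\c,\d)$ and extend by linearity over $\Span\{\j\p_{i}(\c,\d):i\in\N\}$. To see this is well defined and bounded I would test against step functions: for a finitely supported scalar sequence $(c_{i})$ and a measurable $E\subset\T\times\T$, the function $g_{E}:=\J\big(\sum_{i}c_{i}\chi_{E}\j\p_{i}\big)$ lies in $V$ with $\|g_{E}\|^{2}=\int_{E}\sum_{i}|c_{i}|^{2}\|\j\p_{i}(\c,\d)\|_{\R}^{2}$, while by linearity of $U$, Lemma~\ref{characlemma} applied on each $\v(\p_{i})$, and the continuity of $U$ we get $\j(Ug_{E})(\c,\d)=\sum_{i}c_{i}\chi_{E}(\c,\d)(\j\circ U)\p_{i}(\c,\d)$, so that $\|Ug_{E}\|^{2}=\int_{E}\|\sum_{i}c_{i}(\j\circ U)\p_{i}(\c,\d)\|_{\R}^{2}$. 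Since $\|Ug_{E}\|\le\|U\|\|g_{E}\|$ for every measurable $E$, we obtain for a.e.\ $(\c,\d)$ the pointwise bound $\|\sum_{i}c_{i}(\j\circ U)\p_{i}(\c,\d)\|_{\R}^{2}\le\|U\|^{2}\sum_{i}|c_{i}|^{2}\|\j\p_{i}(\c,\d)\|_{\R}^{2}\le\|U\|^{2}\sum_{i}|c_{i}|^{2}$; taking a union of the corresponding null sets over the countable family of finitely supported rational coefficient sequences and then passing to the limit shows that $R(\c,\d)$ extends to a bounded operator on $J(\c,\d)$ with $\|R(\c,\d)\|\le\|U\|$, hence $\esssup\limits_{\c,\d\in\T}\|R(\c,\d)\|<\infty$. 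Measurability of $R$ follows from $R(\c,\d)P(\c,\d)a=\sum_{i}\langle a,\j\p_{i}(\c,\d)\rangle(\j\circ U)\p_{i}(\c,\d)$, a pointwise a.e.\ convergent series of measurable $\R$-valued fields. Finally, for a general $f=\sum_{i}f_{i}\in V$, the continuity of $U$ and of $\j$, Lemma~\ref{characlemma}, and the definition of $R$ give $\j(Uf)(\c,\d)=\sum_{i}m_{i}(\c,\d)(\j\circ U)\p_{i}(\c,\d)=R(\c,\d)(\j f(\c,\d))$, which is \eqref{shiftoperaequivcondi}.

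For the converse, given a measurable range operator $R$ on $J$ with $C:=\esssup\limits_{\c,\d\in\T}\|R(\c,\d)\|<\infty$, define $Uf:=\J\big(R(\cdot)\j f(\cdot)\big)$ for $f\in V$. Using the fiber orthonormal basis, $R(\c,\d)\j f(\c,\d)=\sum_{i}\langle\j f(\c,\d),\j\p_{i}(\c,\d)\rangle R(\c,\d)\j\p_{i}(\c,\d)$, and since the weak measurability of $R$ upgrades, by Pettis' theorem and approximation by simple vector fields as in \cite{bownik}, to measurability of $(\c,\d)\mapsto R(\c,\d)\j\p_{i}(\c,\d)$, the field $(\c,\d)\mapsto R(\c,\d)\j f(\c,\d)$ is measurable; since $\I\I\|R(\c,\d)\j f(\c,\d)\|_{\R}^{2}\le C^{2}\|f\|_{\l}^{2}$ it belongs to $\L$, so $U:V\to\l$ is a well-defined bounded operator with $\|U\|\le C$ satisfying \eqref{shiftoperaequivcondi}. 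It is twisted shift preserving by \eqref{jtwist}: $\j(U\t f)(\c,\d)=R(\c,\d)\big(e^{2\pi i(k\cdot\c+l\cdot\d)}e^{\pi ik\cdot l}\j f(\c,\d)\big)=e^{2\pi i(k\cdot\c+l\cdot\d)}e^{\pi ik\cdot l}\j(Uf)(\c,\d)=\j(\t Uf)(\c,\d)$, and applying $\J$ gives $U\t f=\t Uf$. The correspondence is one-one: if $R,R'$ both realize the same $U$ via \eqref{shiftoperaequivcondi} then $R(\c,\d)$ and $R'(\c,\d)$ agree on every $\j\p_{i}(\c,\d)$, hence on all of $J(\c,\d)$, a.e.; and if $U,U'$ both arise from the same $R$, then $\j(Uf)=\j(U'f)$ a.e., so $Uf=U'f$ for all $f\in V$.

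The main obstacle is the passage from the integrated estimate $\|Ug_{E}\|\le\|U\|\|g_{E}\|$ to the fiberwise norm bound on $\R$, which underpins both the well-definedness and the boundedness of the operators $R(\c,\d)$: it requires the localization over arbitrary measurable sets $E$ together with a careful handling of null sets so that one exceptional set works simultaneously for all rational coefficient sequences. The analogous technical point in the converse is upgrading the weak measurability of $R$ to strong measurability of the vector field $R(\cdot)\j f(\cdot)$. Both are standard but are where essentially all the work lies; the remaining verifications are direct transcriptions of \eqref{jtwist} and Lemma~\ref{characlemma}.
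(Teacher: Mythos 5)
Your proposal is correct and its global architecture coincides with the paper's: decompose $V=\bigoplus_{i\in\N}\v(\p_{i})$ into mutually orthogonal principal twisted shift-invariant spaces with Parseval frame generators (Theorem \ref{decomposition}), define $R(\c,\d)$ on the fibers by sending $\j\p_{i}(\c,\d)$ to $(\j\circ U)\p_{i}(\c,\d)$, use Lemma \ref{characlemma} to transfer the action of $U$ to a multiplier action on each fiber, extend by density from the finitely generated pieces to $J(\c,\d)$, and treat the converse and the one-one correspondence by direct computation with \eqref{jtwist}. Where you genuinely diverge is the proof of the fiberwise bound $\|R(\c,\d)\|\le\|U\|$. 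The paper argues by contradiction: on a positive-measure set $D$ where the bound fails it picks unit vectors $\psi(\c,\d)\in J_{m}(\c,\d)$ with $\|R_{m}(\c,\d)(\psi(\c,\d))\|_{\R}>\|U\|$, forms $\Psi=\psi\chi_{D}$, and pulls back via Proposition \ref{rantau} to an $f$ with $\|Uf\|>\|U\|\|f\|$; this tacitly assumes the selection $(\c,\d)\mapsto\psi(\c,\d)$ can be made measurable, a point the paper does not justify. Your localization argument --- testing $\|Ug_{E}\|\le\|U\|\,\|g_{E}\|$ against all measurable $E$ and a countable family of rational finitely supported coefficient sequences, then discarding a single common null set --- reaches the same bound constructively and sidesteps the measurable-selection issue, at the cost of slightly heavier null-set bookkeeping; each approach is legitimate, and yours is arguably the more airtight of the two. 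Two small points: the single-index, $E=\T\times\T$ case of your estimate is exactly the inequality \eqref{jou<j} established inside the proof of Lemma \ref{characlemma}, and it is what makes $R(\c,\d)$ well defined at points where some $\j\p_{i}(\c,\d)=0$, so it deserves an explicit citation there; and your identity $\|f\|_{\l}^{2}=\sum_{i}\|m_{i}\|_{L^{2}}^{2}$ should carry the restriction to $\Omega_{\p_{i}}$, since $\|\j\p_{i}(\c,\d)\|_{\R}=\chi_{\Omega_{\p_{i}}}(\c,\d)$, though this does not affect the argument.
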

\begin{proof}
	As a consequence of Theorem \ref{decomposition} and Theorem \ref{parsevalframeequiv}, there exists a sequence of Parseval frame generators $\{\p_{i}\}_{i\in \N}$ in $\l$ such that $V=\bigoplus\limits_{i\in\N}\v(\p_{i})$ and for each $i\in \N$, $\|\j\p_{i}(\c,\d)\|_{\R}=1,$ for a.e. $(\c,\d)\in \Omega_{\p_{i}}$. For $m\in \N$, define a twisted shift-invariant space $V_{m}$ by $V_{m}=\bigoplus\limits_{i=1}^{m}\v(\p_{i})$. Let $J_{m}$ be its associated range function. Futher, the collection $\{\t\p_{i} : k,l\in\Z, 1\leq i\leq m\}$ is a Parseval frame for $V_{m}$ and hence $\{\j\p_{i}(\c,\d) : 1\leq i\leq m\}$ is a Parseval frame for $J_{m}(\c,\d)$, using Theorem \ref{frameequivinrange}. Define $R_{m}(\c,\d) : J_{m}(\c,\d)\rightarrow \R$ by \[R_{m}(\c,\d)\bigg(\sum\limits_{i=1}^{m}\alpha_{i}\j\p_{i}(\c,\d)\bigg)=\sum\limits_{i=1}^{m}\alpha_{i}(\j\circ U)\p_{i}(\c,\d).\numberthis\label{Rm}\] But $\{\j\p_{i}(\c,\d) : 1\leq i\leq m\}\setminus\{0\}$ is an orthonormal basis for $J_{m}(\c,\d)$ for a.e. $\c,\d\in \T$. (See Proposition \ref{orthospaceequi}). By \eqref{jou<j}, $R_{m}(\c,\d)$ is well defined. Choose $f\in V_{m}$. Then $f=\sum\limits_{i=1}^{m}f_{i}$ for some $f_{i}\in \v(\p_{i}),$ where $1\leq i\leq m.$ Then by Proposition \ref{jf=rjp}, we have $\j f=\sum\limits_{i=1}^{m}\j f_{i}=\sum\limits_{i=1}^{m}r_{i}\j\p_{i},\hspace{3mm}\text{for some}~r_{i}\in L^{2}(\T\times \T).$ Hence it follows from Lemma \ref{characlemma} and \eqref{Rm} that 
	\begin{align*}
		(\j \circ U)f(\c,\d)&=(\j\circ U\circ \J)(\j f)(\c,\d)\\
		&=\sum\limits_{i=1}^{m}(\j\circ U\circ \J)(r_{i}\j\p_{i})(\c,\d)\\
		&=\sum\limits_{i=1}^{m}r_{i}(\c,\d)(\j\circ U)\p_{i}(\c,\d)\\
		&=\sum\limits_{i=1}^{m}r_{i}(\c,\d)R_{m}(\c,\d)(\j\p_{i}(\c,\d))\\
		&=R_{m}(\c,\d)(\j f(\c,\d)),
	\end{align*} 
for any $f\in V_{m}$. Then one can easily show that $R_{m}$ is measurable. Now, we shall show that \[\|R_{m}(\c,\d)\|\leq \|U\|,\hspace{3mm}\text{for a.e.}~\c,\d\in \T.\numberthis\label{Rmbounded}\]
Suppose \eqref{Rmbounded} does not hold, then there exists a positive Lebesgue measurable set $D\subset \T\times \T$ such that $R_{m}(\c,\d)$ is not bounded by $\|U\|$ for all $(\c,\d)\in D$. In otherwords, there exists $\psi(\c,\d)\in J_{m}(\c,\d)\subset \R$ such that $\|R_{m}(\c,\d)(\psi(\c,\d))\|_{\R}> \|U\|,$ with $\|\psi(\c,\d)\|_{\R}=1$, for $(\c,\d)\in D.$ For $\c,\d\in \T,$ define $\Psi(\c,\d)\in J_{m}(\c,\d)$ by $\Psi(\c,\d)=\psi(\c,\d)\chi_{D}(\c,\d).$ From Proposition \ref{rantau}, there exists $f\in V_{m}$ such that $\j f=\Psi$. But \begin{align*}
	\|Uf\|^{2}_{\l}&=\|(\j\circ U)f\|^{2}_{\L}\\
	&=\I\I\|(\j\circ U)f(\c,\d)\|^{2}_{\R}\,d\d d\c\\
	&=\I\I\|R_{m}(\c,\d)(\j f(\c,\d))\|^{2}_{\R}\, d\d d\c\\
	&=\I\I\|R_{m}(\c,\d)(\psi(\c,\d)\chi_{D}(\c,\d))\|^{2}_{\R}\, d\d d\c\\
	&=\iint\limits_{D}\|R_{m}(\c,\d)(\psi(\c,\d))\|^{2}_{\R}\, d\d d\c\\
	&>\|U\|^{2}\iint\limits_{D}\,d\d d\c\\
	&=\|U\|^{2}\int\limits_{\T}\I\|\psi(\c,\d)\chi_{D}(\c,\d)\|^{2}_{\R}\,d\d d\c\\
	&=\|U\|^{2}\|\j f\|^{2}_{\L}=\|U\|^{2}\|f\|^{2}_{\l}.\\
\end{align*}
This implies that $\|Uf\|>\|U\|\|f\|$, which is contradiction to $\|Uf\|_{\l}\leq \|U\|\|f\|_{\l}$, proving \eqref{Rmbounded}. For any $i\leq m,~m\in \N,$ we have $R_{i}(\c,\d)=R_{m}(\c,\d)|_{J_{i}(\c,\d)}$. We define $R(\c,\d) : \bigcup\limits_{m\in\N}J_{m}(\c,\d)\rightarrow \R$ by $R(\c,\d)(a)=R_{m}(\c,\d)(a)\hspace{3mm}\text{for}~a\in J_{m}(\c,\d),~m\in \N.$ Then for any $m\in\N,$ we have $\|R_{m}(\c,\d)\|\leq \|U\|$ for a.e. $\c,\d\in\T,$ which implies that $\|R(\c,\d)(a)\|_{\R}\leq \|U\|\|a\|_{\R}$ for $a\in \bigcup\limits_{m\in \N}J_{m}(\c,\d)$. Since $\overline{\bigcup\limits_{m\in\N}J_{m}(\c,\d)}=J(\c,\d)$, we can extend $R(\c,\d)$ uniquely to $R(\c,\d) : J(\c,\d)\rightarrow \R$ with $\|R(\c,\d)\|\leq \|U\|$. Let $f\in V$. Then, we can find a sequence $f_{m}\in V_{m}$ such that $f_{m}\rightarrow f$ in $\l$. Moreover, $\j f_{m}(\c,\d) \to \j f(\c,\d)$ and $(\j\circ U) f_{m}(\c,\d) \to (\j\circ U) f(\c,\d)~\text{as}~m\to \infty,\hspace{3mm}\text{for a.e.}~\c,\d\in \T.$ Thus $R(\c,\d)(\j f_{m}(\c,\d)) \to R(\c,\d)(\j f(\c,\d))~\text{as}~m\to \infty.$ Since $(\j\circ U)f_{m}(\c,\d)=R(\c,\d)(\j f_{m}(\c,\d))$, for all $m\in \N$, we obtain $(\j\circ U)f(\c,\d)=R(\c,\d)(\j f(\c,\d)),\hspace{3mm}\forall~f\in V.$ Conversely, let $R$ be a measurable range operator on $J(\c,\d)$ with $\esssup\limits_{\c,\d\in \T}\|R(\c,\d)\|<\infty.$ Define $U : V\rightarrow \l$ by \[U(f)=\J F,\hspace{3mm}\text{where}~F(\c,\d)=R(\c,\d)(\j f(\c,\d)).\] The linearity of $U$ follows from the linearity of $R(\c,\d).$ Now, we shall show that $U$ is bounded and twisted shift preserving operator. Consider,
\begin{align*}
	\|U(f)\|^{2}_{\l}&=\|(\j \circ U)f\|^{2}_{\L}\\
	&=\|F\|^{2}_{\L}\\
	&=\I\I \|R(\c,\d)(\j f(\c,\d))\|^{2}_{\R}\,d\d d\c\\
	&\leq \bigg(\esssup\limits_{\c,\d\in \T}\|R(\c,\d)\|\bigg)^{2}\|\j f\|^{2}_{\L}\\
	&=\bigg(\esssup\limits_{\c,\d\in \T}\|R(\c,\d)\|\bigg)^{2}\|f\|^{2}_{\l}.
\end{align*} Hence $U$ is a bounded operator with $\|U\|\leq\esssup\limits_{\c,\d\in \T}\|R(\c,\d)\|$. Further, for any $k,l\in \Z,f\in V,$ we get $(\j \circ U)\t f(\c,\d)=R(\c,\d)(\j \t f(\c,\d))=\j(\t(Uf))(\c,\d),$
for a.e. $\c,\d\in \T,$ which means that $\j (U\t f)=\j (\t Uf)$. Since $\j$ is injective, $U\t=\t U$. The one-to-one correspondance follows by \eqref{shiftoperaequivcondi}. 
\end{proof}

\begin{example}
	Define $\p(x,y)=e^{2\pi i y}$, $x,y\in \mathbb{R}$. Then $\p\in L^{\infty}(\mathbb{R}^{2})$ and it is ``$1$'' periodic function. Then by Example \ref{example}, $M_{\p}$ is a twisted shift preserving operator. By Theorem \ref{characterizationtheorem}, there exists a range operator $R$ such that $(\j\circ M_{\p})f(\c,\d)=R(\c,\d)(\j f(\c,\d))$, for a.e $\c,\d\in \mathbb{T}$, for any $f\in L^{2}(\mathbb{R}^{2})$. Our aim is to find $R$ explicitly. Towards this end, we compute the kernel of the Weyl transform of $M_{\p}f$. For $\c,\e\in \mathbb{R}$, \begin{align*}
		K_{M_{\p}f}(\c,\e)&=\int\limits_{\mathbb{R}}M_{\p}f(x,\eta-\c)e^{\pi ix(\eta+\c)}\, dx\\
		&=\int\limits_{\mathbb{R}}\p(x,\eta-\c)f(x,\eta-\c)e^{\pi ix(\eta+\c)}\, dx\\
		&=\int\limits_{\mathbb{R}}e^{2\pi i(\eta-\c)}f(x,\eta-\c)e^{\pi ix(\eta+\c)}\, dx\\
		&=e^{2\pi i(\eta-\c)}\int\limits_{\mathbb{R}}f(x,\eta-\c)e^{\pi ix(\eta+\c)}\, dx\\
		&=e^{2\pi i(\eta-\c)}K_{f}(\c,\e).
	\end{align*}
Thus \begin{align*}
	\j(M_{\p}f)(\c,\d)(\eta)&=Z_{W}M_{\p}f(\c,\d,\e)\\
	&=\sum\limits_{m\in \z}K_{M_{\p}f}(\c+m,\e)e^{-2\pi im\d}\\
	&=e^{2\pi i(\eta-\c)}\sum\limits_{m\in \z}K_{f}(\c+m,\e)e^{-2\pi im\d}\\
	&=e^{2\pi i(\eta-\c)}Z_{W}f(\c,\d,\e)\\
	&=e^{2\pi i(\eta-\c)}\j f(\c,\d)(\e).
\end{align*}
Therefore the range operator $R$ for $M_{\p}$ is given by $R(\c,\d)(\cdot)(\e)=e^{2\pi i(\eta-\c)}(\cdot)(\e)$.
\end{example}

\begin{theorem}
	Suppose $V\subset \l$ is a twisted shift-invariant subspace, $J$ is its associated range function and $U$ is a twisted shift preserving operator with its corresponding range operator $R$. Then $U$ is bounded below with a constant $c>0$ if and only if $R(\c,\d)$ is bounded below with a constant $c>0$, for a.e. $\c,\d\in \T.$
\end{theorem}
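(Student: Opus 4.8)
The plan is to transfer the question to the fibres of the range function. By Theorem~\ref{characterizationtheorem}, the range operator $R$ of $U$ satisfies $(\j\circ U)f(\c,\d)=R(\c,\d)(\j f(\c,\d))$ for a.e.\ $\c,\d\in\T$ and all $f\in V$; recall that ``$R(\c,\d)$ bounded below by $c$'' means $\|R(\c,\d)a\|_{\R}\ge c\|a\|_{\R}$ for all $a\in J(\c,\d)$, and that $U$ bounded below by $c$ means $\|Uf\|_{\l}\ge c\|f\|_{\l}$ for all $f\in V$. Since $\j$ is an isometric isomorphism and $\j f(\c,\d)\in J(\c,\d)$ a.e.\ (by Proposition~\ref{rantau}), one has the identity
\[
\|Uf\|_{\l}^{2}=\int_{\T}\int_{\T}\|R(\c,\d)(\j f(\c,\d))\|_{\R}^{2}\,d\d\,d\c ,\qquad f\in V .
\]
The ``if'' direction is then immediate: if $R(\c,\d)$ is bounded below by $c$ for a.e.\ $\c,\d$, the integrand is $\ge c^{2}\|\j f(\c,\d)\|_{\R}^{2}$, and integrating gives $\|Uf\|_{\l}^{2}\ge c^{2}\|\j f\|_{\L}^{2}=c^{2}\|f\|_{\l}^{2}$.

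For the converse I would argue by contradiction, paralleling the proof of Theorem~\ref{characterizationtheorem}. Suppose $U$ is bounded below by $c$ but the set $D$ on which $R(\c,\d)$ fails to be bounded below by $c$ has positive measure; equivalently, with $\gamma(\c,\d):=\inf\{\|R(\c,\d)a\|_{\R}:a\in J(\c,\d),\ \|a\|_{\R}=1\}$, the set $D=\{\gamma<c\}$ has positive measure. First I would verify that $\gamma$ is measurable, by writing it as a countable infimum of the measurable ratios $\|R(\c,\d)P(\c,\d)a\|_{\R}/\|P(\c,\d)a\|_{\R}$ over a fixed countable dense subset of $\R$ (using measurability of $R$ and of the projection $P$), so that $D$ is measurable. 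Then, partitioning $D$ according to which element of that dense set first witnesses $\gamma(\c,\d)<c$ -- exactly the selection device used to produce the vector $\psi(\c,\d)$ in the proof of Theorem~\ref{characterizationtheorem} -- I would obtain a measurable field $(\c,\d)\mapsto\psi(\c,\d)\in J(\c,\d)$ with $\|\psi(\c,\d)\|_{\R}=1$ and $\|R(\c,\d)\psi(\c,\d)\|_{\R}<c$ on $D$.

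Finally I would set $\Psi(\c,\d)=\psi(\c,\d)\chi_{D}(\c,\d)$. Then $\Psi\in\L$ with $\|\Psi(\c,\d)\|_{\R}\le1$, and $\Psi(\c,\d)\in J(\c,\d)$ a.e., so $\Psi\in M_{J}=\j(V)$ by Proposition~\ref{rantau}; hence there is an $f\in V$, necessarily nonzero since $|D|>0$, with $\j f=\Psi$. The norm identity above then gives $\|Uf\|_{\l}^{2}=\iint_{D}\|R(\c,\d)\psi(\c,\d)\|_{\R}^{2}\,d\d\,d\c<c^{2}|D|=c^{2}\|\j f\|_{\L}^{2}=c^{2}\|f\|_{\l}^{2}$, the strict inequality holding because $\|R(\c,\d)\psi(\c,\d)\|_{\R}<c$ on the positive-measure set $D$. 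This contradicts $\|Uf\|_{\l}\ge c\|f\|_{\l}$, and therefore $R(\c,\d)$ must be bounded below by $c$ for a.e.\ $\c,\d\in\T$.

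The routine parts -- the norm identity, the ``if'' direction, and the final estimate -- are straightforward. The main obstacle is the measure-theoretic bookkeeping in the converse: confirming that $\gamma$ is measurable and extracting a measurable unit-vector field $\psi$ witnessing the failure of the lower bound. Both are standard measurable-selection arguments, entirely parallel to the selection of $\psi(\c,\d)$ already carried out in the proof of Theorem~\ref{characterizationtheorem}, so I would invoke that reasoning rather than reproduce it in full.
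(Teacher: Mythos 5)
Your proposal is correct and follows essentially the same route as the paper: the ``if'' direction is the identical integration of the fibrewise lower bound via the isometry $\j$, and for the converse the paper simply invokes the same contradiction argument (selecting a unit vector field $\psi$ on a positive-measure set $D$ where the bound fails, setting $\Psi=\psi\chi_{D}$, and pulling back to some $f\in V$ with $\j f=\Psi$) that it used to prove $\|R_{m}(\c,\d)\|\leq\|U\|$ in Theorem~\ref{characterizationtheorem}. Your write-up is in fact more careful than the paper's, since you address the measurability of $\gamma$ and the measurable selection of $\psi$ explicitly rather than by reference.
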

\begin{proof}
	Assume that $R(\c,\d)$ is bounded below with a constant $c>0$, for a.e. $\c,\d\in \T$. Let $f\in V.$ Consider, 
	\begin{align*}
		\|U(f)\|^{2}_{\l}&=\|(\j\circ U)f\|^{2}_{\L}\\
		&=\I\I\|R(\c,\d)(\j f(\c,\d))\|^{2}_{\R}\,d\d d\c\\
		&\geq c^{2}\|\j f\|^{2}_{\L}=c^{2}\|f\|^{2}_{\l}.
	\end{align*}
	 Since $f\in V$ is arbitrary, $U$ is bounded below by $c>0$. Conversely assume that $U$ is bounded below. The proof of $R(\c,\d)$ is bounded below makes use of similar arguments as that of $R_{m}(\c,\d)$ is bounded above in the proof of Theorem \ref{characterizationtheorem}. 
\end{proof}
\begin{theorem}\label{Uproperties}
	Suppose $V\subset \l$ is a twisted shift-invariant subspace, $J$ is its associated range function and $U : V\rightarrow V$ is a twisted shift preserving operator with its corresponding range operator $R$. Then the following holds. \begin{enumerate}
		\item [(i)] The adjoint operator $U^{\ast} : V \rightarrow V$ is twisted shift preserving operator and its corresponding range operator $R^{\ast}$ is given by $R^{\ast}(\c,\d)=R(\c,\d)^{\ast},$ for a.e. $\c,\d\in \T.$
		\item [(ii)] Let $A\leq B$ be two real numbers. Then $U$ is self-adjoint and $\sigma(U)\subset [A,B]$ if and only if $R(\c,\d)$ is self-adjoint and $\sigma(R(\c,\d))\subset [A,B]$, for a.e. $\c,\d\in \T.$
		\item [(iii)] U is unitary if and only if $R(\c,\d)$ is uniatry for a.e. $\c,\d\in \T.$
	\end{enumerate}
\end{theorem}
\begin{proof}
	(i) We observe that $\langle U^{\ast}\t f, g\rangle=\langle f, T^{t}_{(-k,-l)}Ug\rangle=\langle f, UT^{t}_{(-k,-l)}g\rangle=\langle \t U^{\ast}f, g\rangle.$ for all $f,g\in \l$ and $k,l\in \Z$ from which it follows that $U^{\ast}\t=\t U^{\ast}$. Define an operator $R^{\ast}$ by $R^{\ast}(\c,\d)=R(\c,\d)^{\ast}$. Since $R$ is measurable, $R^{\ast}$ is measurable. Moreover, $\esssup\limits_{\c,\d\in \T}\|R^{\ast}(\c,\d)\|=\esssup\limits_{\c,\d\in \T}\|R(\c,\d)\|<\infty.$ By Theorem \ref{characterizationtheorem}, there exists a twisted shift preserving operator $W$ such that $(\j\circ W)f(\c,\d)=R^{\ast}(\c,\d)(\j f(\c,\d))$ for a.e. $\c,\d\in \T$, for all $f\in V.$ Let $f,g\in \l$. Then $U^{\ast}=W$. In fact, \begin{align*}
		\langle U^{\ast} f,g \rangle_{\l}&=\langle f, Ug\rangle_{\l}\\
		&=\I\I \langle \j f(\c,\d), R(\c,\d)(\j g(\c,\d)) \rangle_{\R}\, d\d d\c\\
		&=\I\I \langle R(\c,\d)^{\ast}(\j f(\c,\d)), \j g(\c,\d) \rangle_{\R}\, d\d d\c\\
		&=\I\I \langle (\j\circ W)f(\c,\d), \j g(\c,\d) \rangle_{\R}\, d\d d\c\\
		&=\langle Wf, g\rangle,
	\end{align*}
	thus proving our assertion.\\
	
	(ii) By making use of \eqref{shiftoperaequivcondi}, we can show that $U$ is self-adjoint if and only if $R(\c,\d)$ is self-adjoint for a.e. $\c,\d\in \T$. Assume that $U$ is self-adjoint and $\sigma(U)\subset [A,B]$. In otherwords, $U-A$ and $B-U$ are positive operators. We claim that $\sigma(R(\c,\d))\subset [A,B]$ for a.e. $\c,\d\in \T$. It is sufficient to show that $R(\c,\d)-A$ and $B-R(\c,\d)$ are positive operators, for a.e. $\c,\d\in \T.$ Suppose it were not true, then there would exist a positive Lebesgue measurable set $D\subset \T\times \T$ such that either $R(\c,\d)-A\hspace{3mm}\text{is not a positive operator}$ or $B-R(\c,\d)\hspace{3mm}\text{is not a positive operator}$ for $(\c,\d)\in D$. Suppose $R(\c,\d)-A\hspace{3mm}\text{is not a positive operator}$, then there exists $\psi(\c,\d)\in J(\c,\d)$ such that $\langle R(\c,\d)(\psi(\c,\d)), \psi(\c,\d)\rangle < A\langle \psi(\c,\d),\psi(\c,\d)\rangle$ for $\c,\d\in D.$ Without loss of generality assume that $\|\psi(\c,\d)\|_{\R}=1$ on $(\c,\d)\in D$. Now, define $\Psi(\c,\d)\in J(\c,\d)$ such that $\Psi(\c,\d)=\psi(\c,\d)\chi_{D}(\c,\d)$. By using Proposition \ref{rantau}, there exists $f\in V$ such that $\j f(\c,\d)=\Psi(\c,\d)$. Consider,
	\begin{align*}
		\langle Uf, f\rangle &=\I\I \langle (\j\circ U)f(\c,\d), \j f(\c,\d)\rangle \, d\d d\c\\
		&=\I\I \langle R(\c,\d)(\j f(\c,\d)), \j f(\c,\d)\rangle \, d\d d\c\\
		&=\iint\limits_{D} \langle R(\c,\d)(\psi(\c,\d)), \psi(\c,\d)\rangle \, d\d d\c\\
		&<A\iint\limits_{D} \langle \psi(\c,\d),\psi(\c,\d)\rangle \, d\d d\c\\
		&=A\I\I \langle \j f(\c,\d), \j f(\c,\d)\rangle \,d\d d\c,
	\end{align*} which is a contradiction to $U-A$ is a positive operator. Conversely assume that $R(\c,\d)$ is self adjoint and $\sigma(R(\c,\d))\subset [A,B]$ for a.e. $\c,\d\in \T.$ That is, \[A\|\j f(\c,\d)\|^{2}_{\R}\leq \langle R(\c,\d)(\j f(\c,\d)), \j f(\c,\d)\rangle_{\R}\leq B\|\j f(\c,\d)\|^{2}_{\R},\] for a.e. $\c,\d\in \T$, for any $f\in V$. Integrating the above inequality with respect to $\c,\d\in \T$, we get \[A\|\j f\|^{2}_{\L}\leq \I\I\langle (\j\circ U)f(\c,\d), \j f(\c,\d)\rangle_{\R}\,d\d d\c\leq B\|\j f\|^{2}_{\L}.\] As $\j$ is isometry, we get $A\|f\|^{2}\leq \langle Uf,f\rangle \leq B\|f\|^{2},$ for any $f\in V$ which shows that $U$ is self adjoint and $\sigma(U)\subset [A,B].$\\
	
	(iii) Since $U$ and $U^{\ast}$ are twisted shift preserving operators, $UU^{\ast}$ and $U^{\ast}U$ are twisted shift preserving operators. By making use of Theorem \ref{characterizationtheorem}, we can show that the range operators for $UU^{\ast}$ and $U^{\ast}U$ are given by $R(\c,\d)R^{\ast}(\c,\d)$ and $R^{\ast}(\c,\d)R(\c,\d)$ respectively, for a.e. $\c,\d\in \T$. But $\sigma(UU^{\ast})=\sigma(U^{\ast}U)=\sigma(I)=\{ 1 \}$. Then it follows from (ii) that $\sigma(R(\c,\d)R^{\ast}(\c,\d))=\sigma(R^{\ast}(\c,\d)R(\c,\d))=\sigma(I_{(\c,\d)})=\{ 1 \}$, where $I_{(\c,\d)}$ is the identity operator on $J(\c,\d)$, proving that $R(\c,\d)$ is unitary for a.e. $\c,\d\in \T$. The converse follows in a similar direction.
\end{proof}

\section{Frame operator as a twisted shift preserving operator}
For $\c,\d \in\T$, define $H(\c,\d) : \ell^{2}(\mathbb{Z})\rightarrow \R$ by
$$H(\xi,\d)(\{c_{s}\})=\sum\limits_{s\in\mathbb{Z}}c_{s}\j \p_{s}(\xi,\d).$$
If $H(\xi,\d)$ defines a bounded operator from $\ell^{2}(\mathbb{Z})$ into $\R$ then its adjoint $H(\xi,\d)^{\ast}:\R\rightarrow \ell^{2}(\mathbb{Z})$ is given by
\begin{align*}
	H(\xi,\d)^{\ast}\p=\Big\{\big<\p,\j\p_{s}(\xi,\d)\big>_{\R}\Big\}_{s\in\mathbb{Z}}.
\end{align*}
The Gramian associated with the system $\{\j \p_{s}(\xi,\d) : s\in\mathbb{Z}\}$, denoted by $G(\xi,\d)$, is defined by $G(\xi,\d)=H(\xi,\d)^{\ast}H(\xi,\d)$. The corresponding dual Gramian, denoted by $\widetilde{G}(\xi,\d)$, is defined by $\widetilde{G}(\xi,\d)=H(\xi,\d)H(\xi,\d)^{\ast}$.\\

In the following theorems, we prove that the frame operator and its corresponding inverse are twisted shift preserving operators. We also find their corresponding range operators.
\begin{lemma}\label{Hlemma}
	Let $f\in L^{2}(\mathbb{R}^{2n})$ and the system $\{\t\p_{s} : k,l\in \Z,s\in \z\}$ be a frame sequence. Then the following equality holds.
	\begin{equation*}
	\sum\limits_{(k,l,s)\in \mathbb{Z}^{2n+1}}\big|\big<f,T^{t}_{(k,l)}\p_{s}\big>\big|^{2}=\I\I \sum\limits_{s\in\z}|\langle \j f(\c,\d), \j \p_{s}(\c,\d)\rangle|^{2}\,d\d d\c.
	\end{equation*}
	The proof follows along the same lines as in Lemma 3.2 in \cite{rabeetha}.
\end{lemma}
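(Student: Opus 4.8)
The plan is to prove the Parseval/Plancherel-type identity in Lemma~\ref{Hlemma} by expanding the left-hand side using the group structure of twisted translates and the fact that $\j$ is an isometric isomorphism. First I would fix $f\in\l$ and, for each $s\in\z$, rewrite the inner sum over $(k,l)\in\Z$ of $|\langle f,\t\p_s\rangle|^2$. Using \eqref{jtwist} together with the isometry of $\j$, we have
\[
\langle f,\t\p_s\rangle_{\l}=\I\I\langle \j f(\c,\d),\j\t\p_s(\c,\d)\rangle_{\R}\,d\d d\c
=\I\I e^{-2\pi i(k\cdot\c+l\cdot\d)}e^{-\pi i k\cdot l}\langle \j f(\c,\d),\j\p_s(\c,\d)\rangle_{\R}\,d\d d\c.
\]
Thus $\langle f,\t\p_s\rangle$ is, up to the unimodular constant $e^{-\pi i k\cdot l}$, the $(k,l)$-th Fourier coefficient of the function $(\c,\d)\mapsto\langle \j f(\c,\d),\j\p_s(\c,\d)\rangle_{\R}$ on $\T\times\T$. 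This function lies in $L^{1}(\T\times\T)$ by Cauchy--Schwarz and the bracket-map bound $\|[\,\cdot,\cdot\,]\|_{L^1}\le\|\cdot\|_{\l}\|\cdot\|_{\l}$; but to invoke Parseval we need it in $L^2(\T\times\T)$.

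The key step is therefore to justify that $g_s(\c,\d):=\langle \j f(\c,\d),\j\p_s(\c,\d)\rangle_{\R}$ is square-integrable on $\T\times\T$ and then apply Parseval's identity for the orthonormal basis $\{e^{2\pi i(k\cdot\c+l\cdot\d)}:k,l\in\Z\}$ of $L^2(\T\times\T)$, which gives
\[
\sum_{(k,l)\in\Z}|\langle f,\t\p_s\rangle|^2=\sum_{(k,l)\in\Z}|\widehat{g_s}(k,l)|^2=\I\I|g_s(\c,\d)|^2\,d\d d\c
=\I\I|\langle \j f(\c,\d),\j\p_s(\c,\d)\rangle_{\R}|^2\,d\d d\c.
\]
Summing over $s\in\z$ and interchanging the (nonnegative) sums via Tonelli yields the claimed equality. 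To handle the $L^2$ integrability of $g_s$ and the finiteness of the whole double sum at once, I would use the hypothesis that $\{\t\p_s:k,l\in\Z,s\in\z\}$ is a frame sequence: by Theorem~\ref{frameequivinrange}, $\{\j\p_s(\c,\d):s\in\z\}$ is a frame for $J(\c,\d)$ with uniform upper bound $B$, so $\sum_{s}|\langle\Phi,\j\p_s(\c,\d)\rangle_{\R}|^2\le B\|\Phi\|_{\R}^2$ pointwise; applying this with $\Phi=\j f(\c,\d)$ and integrating over $\T\times\T$ gives $\sum_s\I\I|g_s|^2\le B\|\j f\|_{\L}^2=B\|f\|_{\l}^2<\infty$, which simultaneously secures $g_s\in L^2(\T\times\T)$ for each $s$ and the convergence needed to rearrange.

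The main obstacle I anticipate is purely bookkeeping about convergence and the order of summation: one must be careful that the Bessel-type bound from Theorem~\ref{frameequivinrange} holds for a.e.\ $(\c,\d)$ with a single constant $B$ (not one depending on $(\c,\d)$), so that the dominated/monotone convergence and Tonelli applications are legitimate, and that the a.e.\ pointwise identity $\|\j f(\c,\d)\|_{\R}^2=\int_{\r}|Z_W f(\c,\d,\e)|^2\,d\e$ is used consistently with the definition of $\j$. Since the statement already directs the reader to Lemma~3.2 of \cite{rabeetha}, I would keep the write-up short: set up $g_s$, cite the isometry of $\j$ and \eqref{jtwist} for the Fourier-coefficient identification, invoke Theorem~\ref{frameequivinrange} for the uniform Bessel bound that legitimizes the rearrangement, and conclude by Parseval plus Tonelli.
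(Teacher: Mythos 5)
Your proposal is correct and follows essentially the same route the paper points to (Lemma 3.2 of the cited reference): use the isometry of $\j$ and \eqref{jtwist} to identify $\langle f,\t\p_s\rangle$ with a Fourier coefficient of $(\c,\d)\mapsto\langle \j f(\c,\d),\j\p_s(\c,\d)\rangle_{\R}$, apply Parseval on $\T\times\T$, and sum over $s$ by Tonelli. The only point worth making explicit in a write-up is that, since $f$ (resp.\ $\j f(\c,\d)$) need not lie in $V^{t}(\A)$ (resp.\ $J(\c,\d)$), the uniform Bessel bound from Theorem \ref{frameequivinrange} is applied after composing with the orthogonal projection onto the fiber, which does not change any of the inner products.
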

\begin{theorem}
	Let $E^{t}(\A)$ be a Bessel sequence and $S$ be the frame operator for the corresponding system $E^{t}(\A)$. Then $S$ is a twisted shift preserving operator. The range operator associated with $S$ is given by $R(\c,\d)=\widetilde{G}(\c,\d)$, where $\widetilde{G}(\c,\d)$ is the dual Gramian operator associated with the system $\{\j \p_{s}(\c,\d) : s\in \z\}$, for a.e. $\c,\d\in \T.$
\end{theorem}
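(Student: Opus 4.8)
The goal is to show that the frame operator $S$ for $E^{t}(\A)$ is twisted shift preserving and that its range operator is the dual Gramian $\widetilde{G}(\c,\d)$. The approach splits naturally into two parts: first the commutation $S\t = \t S$, then the identification of the range operator via the characterization in Theorem \ref{characterizationtheorem}.

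First I would verify that $S$ commutes with every twisted translation. Recall $Sf = \sum_{(k,l,s)} \langle f, \t\p_{s}\rangle \t\p_{s}$. Applying a twisted translation $T^{t}_{(k_0,l_0)}$ and using the composition rule \eqref{comptsttrns}, together with the fact that $T^{t}_{(k_0,l_0)}$ is unitary so that $\langle T^{t}_{(k_0,l_0)}f, \t\p_{s}\rangle = \langle f, T^{t}_{(-k_0,-l_0)}\t\p_{s}\rangle$, one sees that the unimodular phase factors from \eqref{comptsttrns} appearing in the inner product and in the translated frame element cancel in pairs (they are complex conjugates of each other). After re-indexing the sum over $(k,l)$ by the shifted lattice point, the sum defining $S T^{t}_{(k_0,l_0)} f$ collapses to $T^{t}_{(k_0,l_0)} S f$. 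So $S$ is twisted shift preserving, and since $E^{t}(\A)$ is Bessel, $S$ is bounded on all of $\l$; here $V = \l$ is the ambient twisted shift-invariant space, with range function $J(\c,\d) = \R$ for a.e. $\c,\d$.

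Next I would identify the range operator. By Theorem \ref{characterizationtheorem}, there is a unique measurable range operator $R$ with $(\j\circ S)f(\c,\d) = R(\c,\d)(\j f(\c,\d))$ for a.e. $\c,\d$. To compute it, apply $\j$ to $Sf = \sum_{(k,l,s)} \langle f, \t\p_{s}\rangle \t\p_{s}$. Using \eqref{jtwist} to rewrite $\j(\t\p_{s})(\c,\d) = e^{2\pi i(k\cdot\c+l\cdot\d)}e^{\pi i k\cdot l}\j\p_{s}(\c,\d)$, and using Lemma \ref{Hlemma} (or rather the Parseval-type identity behind it, namely that $\langle f, \t\p_{s}\rangle$ is the Fourier coefficient in $(k,l)$ of the function $(\c,\d)\mapsto e^{-\pi i k\cdot l}\langle \j f(\c,\d), \j\p_{s}(\c,\d)\rangle_{\R}$ against the orthonormal basis $\{e^{2\pi i(k\cdot\c+l\cdot\d)}\}$), the double sum over $(k,l)$ resolves into a pointwise expression. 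Concretely, interchanging summation and using orthonormality of the exponentials, $\j(Sf)(\c,\d) = \sum_{s\in\z} \langle \j f(\c,\d), \j\p_{s}(\c,\d)\rangle_{\R}\, \j\p_{s}(\c,\d)$, which is exactly $H(\c,\d)H(\c,\d)^{\ast}(\j f(\c,\d)) = \widetilde{G}(\c,\d)(\j f(\c,\d))$. Hence $R(\c,\d) = \widetilde{G}(\c,\d)$ for a.e. $\c,\d$, and measurability plus the essential-sup bound on $\|\widetilde{G}(\c,\d)\|$ follow from the Bessel hypothesis together with Theorem \ref{frameequivinrange}.

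The main obstacle is the interchange-of-summation step that turns the $(k,l)$-indexed sum into a pointwise-in-$(\c,\d)$ operator: one must justify convergence in $\l$ of the defining series for $Sf$, push $\j$ through it (legitimate since $\j$ is an isometric isomorphism, hence continuous), and then recognize the resulting double sum as a Fourier series that sums to the dual Gramian fiberwise. The Bessel condition is what makes all of this licit — it bounds $\sum_{s}\|\j\p_{s}(\c,\d)\|_{\R}^{2}$ essentially uniformly (via Theorem \ref{frameequivinrange} applied to the Bessel bound) so that $\widetilde{G}(\c,\d)$ is a bounded operator a.e. with $\esssup_{\c,\d}\|\widetilde{G}(\c,\d)\| < \infty$, matching the hypothesis of the converse direction of Theorem \ref{characterizationtheorem}. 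Once the fiberwise identity $\j(Sf)(\c,\d)=\widetilde{G}(\c,\d)(\j f(\c,\d))$ is in hand, uniqueness of the range operator finishes the proof.
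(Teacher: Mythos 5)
Your proof of the commutation $S T^{t}_{(k_0,l_0)} = T^{t}_{(k_0,l_0)} S$ is the same re-indexing computation as the paper's, so that part matches. For the identification of the range operator, however, you take a genuinely different route. The paper does not compute $\j(Sf)$ directly: it evaluates the quadratic form $\langle Sf,f\rangle$ in two ways --- once through the range operator $R$ supplied by Theorem \ref{characterizationtheorem}, and once through Lemma \ref{Hlemma}, which yields $\langle Sf,f\rangle=\I\I\langle\widetilde{G}(\c,\d)(\j f(\c,\d)),\j f(\c,\d)\rangle\,d\d d\c$ --- then sets $\widetilde{R}(\c,\d)=R(\c,\d)-\widetilde{G}(\c,\d)$, observes this is self-adjoint, realizes it as a self-adjoint twisted shift preserving operator $W$ via the converse direction of Theorem \ref{characterizationtheorem}, and concludes $W=0$ from $\langle Wf,f\rangle=0$ for all $f$. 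You instead push $\j$ through the defining series for $Sf$, recognize the $(k,l)$-sum as a Fourier series in which the phases $e^{\pm\pi i k\cdot l}$ cancel, and obtain the fiberwise identity $\j(Sf)(\c,\d)=\sum_{s}\langle\j f(\c,\d),\j\p_{s}(\c,\d)\rangle\,\j\p_{s}(\c,\d)=\widetilde{G}(\c,\d)(\j f(\c,\d))$ directly, then invoke uniqueness of the range operator. Your approach is more constructive and essentially re-proves the content of Lemma \ref{Hlemma} in vector-valued form, at the cost of having to justify the interchange of summation and the passage from $L^{2}(\T\times\T;\R)$-convergence to a.e.\ fiberwise equality (which you correctly flag as the main obstacle); the paper's argument sidesteps these convergence issues by working only with scalar quadratic forms and leaning on the already-established correspondence and self-adjointness results of Section 4. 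Two small inaccuracies in your write-up, neither fatal: the Bessel hypothesis gives $\esssup_{\c,\d}\|\widetilde{G}(\c,\d)\|\leq B$ because the fibers $\{\j\p_{s}(\c,\d)\}_{s}$ are Bessel with the same bound, but it does \emph{not} bound $\sum_{s}\|\j\p_{s}(\c,\d)\|_{\R}^{2}$ (that is the trace of $\widetilde{G}(\c,\d)$, which may be infinite); and the paper works with $S$ on $V^{t}(\A)$, whose range function is $J(\c,\d)=\overline{\Span\{\j\p_{s}(\c,\d):s\in\z\}}$ rather than all of $\R$ as you assert.
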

\begin{proof}
	The frame operator $S$ for the system $E^{t}(\A)$ is given by \[Sf=\sum\limits_{(k,l,s)\in\mathbb{Z}^{2n+1}}\langle f, \t\p_{s}\rangle_{\l}\t\p_{s}\hspace{3mm}\forall f\in \v(\A).\] Consider, for $f\in \v(\A)$ and $k^{'},l^{'}\in \Z$. \begin{align*}
		T^{t}_{(k^{'},l^{'})} Sf
		&=\sum\limits_{(k,l,s)\in\mathbb{Z}^{2n+1}}\langle f, \t\p_{s}\rangle e^{-\pi i(k^{'}\cdot l-k\cdot l^{'})}	T^{t}_{(k^{'}+k,l^{'}+l)}\p_{s}\\
			&=\sum\limits_{(k,l,s)\in\mathbb{Z}^{2n+1}}\langle f, T^{t}_{(k-k^{'},l-l^{'})}\p_{s}\rangle e^{-\pi i(k^{'}\cdot (l-l^{'})-(k-k^{'})\cdot l^{'})}	T^{t}_{(k,l)}\p_{s}\\
			&=\sum\limits_{(k,l,s)\in\mathbb{Z}^{2n+1}}\langle f, e^{-\pi i(l\cdot(-k^{'}) -k\cdot(-l^{'}))}T^{t}_{(k-k^{'},l-l^{'})}\p_{s}\rangle	T^{t}_{(k,l)}\p_{s}\\
			&=\sum\limits_{(k,l,s)\in\mathbb{Z}^{2n+1}}\langle T^{t}_{(k^{'},l^{'})}f, \t\p_{s}\rangle\t\p_{s}\\
			&=ST^{t}_{(k^{'},l^{'})}f.
	\end{align*} 
	Thus $T^{t}_{(k^{'},l^{'})}S =ST^{t}_{(k^{'},l^{'})}.$ Let $R$ be the range operator associated with $S$ satisfying $(\j \circ S)f(\c,\d)=R(\c,\d)(\j f(\c,\d))$. Now, \[\langle Sf, f\rangle=\I\I \langle \j Sf(\c,\d), \j f(\c,\d)\rangle\, d\d d\c=\I\I \langle R(\c,\d)(\j f(\c,\d)), \j f(\c,\d)\rangle\, d\d d\c.\numberthis\label{sf1}\]
	On the other hand, by using Lemma \ref{Hlemma}, we get
	\begin{align*}\label{sf2}
		\langle Sf, f\rangle&=\sum\limits_{(k,l,s)\in\mathbb{Z}^{2n+1}}|\langle f, \t\p_{s}\rangle|^{2}\\
		&=\I\I \sum\limits_{s\in\z}|\langle \j f(\c,\d), \j \p_{s}(\c,\d)\rangle|^{2}\,d\d d\c\\
		&=\I\I \langle \widetilde{G}(\c,\d)(\j f(\c,\d)), \j f(\c,\d)\rangle \, d\d d\c.\numberthis
	\end{align*}
	Define a range operator $\widetilde{R}$ by $\widetilde{R}(\c,\d)=R(\c,\d)-\widetilde{G}(\c,\d).$ Notice that it is a self adjoint operator. By Theorem \ref{characterizationtheorem} and Theorem \ref{Uproperties}, there exists a self adjoint twisted shift preserving operator $W$ such that \[(\j \circ W)f(\c,\d)=\widetilde{R}(\c,\d)(\j f(\c,\d))\numberthis\label{Wrtilda}.\] By combining \eqref{sf1} and \eqref{sf2}, we get 
	\begin{align*}
		0&=\I\I \langle \widetilde{R}(\c,\d)(\j f(\c,\d)), \j f(\c,\d)\rangle\, d\d d\c\\
		&=\I\I \langle (\j\circ W)f(\c,\d), \j f(\c,\d)\rangle\, d\d d\c\\
		&=\langle \j (W f), \j f\rangle=\langle W f,f\rangle 
	\end{align*} 
for all $f\in \v(\A)$, which implies that $W=0$. Hence the corresponding range operator $\widetilde{R}=0$, proving our assertion.
\end{proof}

\begin{theorem}
	Suppose $E^{t}(\A)$ is a frame sequence with constants $A$ and $B$. Let $S$ be the frame operator associated with the system $E^{t}(\A)$. Then $S^{-1}$ is a twisted shift preserving operator. The range operator $R$ associated with $S^{-1}$ is given by $R(\c,\d)=\widetilde{G}(\c,\d)^{-1}$, where $\widetilde{G}(\c,\d)$ is the dual Gramian operator corresponding to the system $\{\j\p_{s}(\c,\d) : s\in \Z\}$, for a.e. $\c,\d\in \T$. 
\end{theorem}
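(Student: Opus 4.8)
The plan is to first verify that $S^{-1}$ is a twisted shift preserving operator on $\v(\A)$, and then to pin down its range operator by combining the relation $SS^{-1}=S^{-1}S=I$ with the preceding theorem, which already identifies the range operator of $S$ as the dual Gramian $\widetilde{G}(\c,\d)$. Since $E^{t}(\A)$ is a frame sequence with bounds $A,B$, the frame operator $S$ is a bounded, positive, invertible operator on $\v(\A)$, so $S^{-1}:\v(\A)\to\v(\A)$ is bounded with $\|S^{-1}\|\le A^{-1}$. Composing the identity $T^{t}_{(k,l)}S=ST^{t}_{(k,l)}$ established in the preceding theorem with $S^{-1}$ on both sides yields $S^{-1}T^{t}_{(k,l)}=T^{t}_{(k,l)}S^{-1}$ for all $k,l\in\Z$; hence $S^{-1}$ is a twisted shift preserving operator, and by Theorem \ref{characterizationtheorem} there is a measurable range operator $R$ on $J$ with $\esssup\limits_{\c,\d\in\T}\|R(\c,\d)\|<\infty$ and $(\j\circ S^{-1})f(\c,\d)=R(\c,\d)(\j f(\c,\d))$ for a.e. $\c,\d\in\T$ and all $f\in\v(\A)$.

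Next I would observe that $\widetilde{G}(\c,\d)$ is invertible on the fiber $J(\c,\d)$ for a.e. $\c,\d\in\T$: by Theorem \ref{frameequivinrange} the system $\{\j\p_{s}(\c,\d):s\in\z\}$ is a frame for $J(\c,\d)$ with bounds $A,B$, and $\widetilde{G}(\c,\d)=H(\c,\d)H(\c,\d)^{\ast}$ is exactly the frame operator of this fiber frame; therefore $\widetilde{G}(\c,\d)$, viewed as an operator on $J(\c,\d)$, is bounded, positive and invertible with $B^{-1}I\le\widetilde{G}(\c,\d)^{-1}\le A^{-1}I$, so $\widetilde{G}(\c,\d)^{-1}$ is a legitimate bounded range operator on $J$. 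Now, arguing as in the proof of Theorem \ref{Uproperties}(iii), the factorization $(\j\circ SS^{-1})f(\c,\d)=\widetilde{G}(\c,\d)\big(R(\c,\d)(\j f(\c,\d))\big)$ together with the one-to-one correspondence of Theorem \ref{characterizationtheorem} shows that the range operator of $SS^{-1}$ is $\widetilde{G}(\c,\d)R(\c,\d)$ and that of $S^{-1}S$ is $R(\c,\d)\widetilde{G}(\c,\d)$, for a.e. $\c,\d\in\T$. Since $SS^{-1}=S^{-1}S=I$ on $\v(\A)$ and the range operator of the identity operator is $\c,\d\mapsto I_{J(\c,\d)}$, uniqueness in that correspondence forces $\widetilde{G}(\c,\d)R(\c,\d)=R(\c,\d)\widetilde{G}(\c,\d)=I_{J(\c,\d)}$ for a.e. $\c,\d\in\T$; hence $R(\c,\d)=\widetilde{G}(\c,\d)^{-1}$, as claimed.

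I expect the main obstacle to be making the "range operator of a composition equals the composition of range operators" step fully rigorous: this rests on checking that $\widetilde{G}(\c,\d)R(\c,\d)$ is itself a measurable, essentially bounded range operator and then invoking the uniqueness clause of Theorem \ref{characterizationtheorem}. One can avoid this by a direct argument instead: writing an arbitrary $f\in\v(\A)$ as $f=Sg$, so that $\j(S^{-1}f)(\c,\d)=\j g(\c,\d)=\widetilde{G}(\c,\d)^{-1}\j f(\c,\d)$ on $J(\c,\d)$, and using that $\{\j f(\c,\d):f\in\v(\A)\}$ is dense in $J(\c,\d)$ for a.e. $\c,\d$ to identify $R(\c,\d)$ with $\widetilde{G}(\c,\d)^{-1}$. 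A secondary point needing care is the domain bookkeeping: $\widetilde{G}(\c,\d)$ is invertible only on $J(\c,\d)$, not on all of $\R$, so all the identities above must be read with the operators restricted to $J(\c,\d)$, which is precisely where $R(\c,\d)$ takes its values.
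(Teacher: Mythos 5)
Your proposal is correct and follows essentially the same route as the paper: deduce $S^{-1}\t=\t S^{-1}$ from the commutation already proved for $S$, invoke Theorem \ref{characterizationtheorem} to obtain a range operator $R$, and then show $\widetilde{G}(\c,\d)R(\c,\d)=R(\c,\d)\widetilde{G}(\c,\d)=I$ on $J(\c,\d)$ --- the paper does this by evaluating on the generators $\j\p_{s}(\c,\d)$ and a density argument, which is precisely your alternative ``direct argument.'' The points you flag as needing care (invertibility of $\widetilde{G}(\c,\d)$ restricted to the fiber, and the composition-of-range-operators step) are exactly the details the paper leaves implicit, so your version is if anything slightly more complete.
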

\begin{proof}
For $f\in \l$ and $k,l\in \Z$, we notice that $S^{-1}\t f=S^{-1}\t SS^{-1}f=S^{-1}S\t S^{-1}f$, which in turn implies that $S^{-1}\t=\t S^{-1}$. By Theorem \ref{characterizationtheorem}, there exists a range operator $R$ such that $(\j \circ S^{-1})f(\c,\d)=R(\c,\d)(\j f(\c,\d))$ for all $f\in \v(\A)$, for a.e. $\c,\d\in \T.$ Now, \begin{align*}
	\widetilde{G}(\c,\d)(R(\c,\d)(\j \p_{s}(\c,\d)))&=\widetilde{G}(\c,\d)(\j \circ S^{-1})\p_{s}(\c,\d)\\
	&=(\j \circ S)(S^{-1}\p_{s})(\c,\d)\\
	&=\j\p_{s}(\c,\d).
\end{align*}
Since $\widetilde{G}(\c,\d)R(\c,\d)$ is a bounded linear operator on $J(\c,\d)$, by density argument we can show that $\widetilde{G}(\c,\d)R(\c,\d)=I_{(\c,\d)}$, where $I_{(\c,\d)}$ is the identity operator on $J(\c,\d)$. Similarly, we can show that $R(\c,\d)\widetilde{G}(\c,\d)=I_{(\c,\d)}$. Therefore $R(\c,\d)=\widetilde{G}(\c,\d)^{-1}$ for a.e. $\c,\d\in \T.$ 
\end{proof}
\vspace{10mm}
STATEMENTS AND DECLARATIONS\\

FUNDING : The author (R .R) thanks NBHM, DAE, India for the research project grant. The author (R .V) thanks University Grants Commission, India for the financial support.\\

COMPETING INTERESTS : The authors have no relevant financial or non-financial interests to disclose.\\

AVAILABILITY OF DATA : Not applicable.\\

CODE AVAILABILITY : Not applicable.

\vspace{5mm}
	
	\bibliographystyle{amsplain}
	\bibliography{shiftpreserving}
	\end{document}